\documentclass[12pt]{article}
\usepackage[numbers,square]{natbib}
\usepackage{amssymb,latexsym}
\usepackage[left=1in,top=1in,right=1in,bottom=1in]{geometry}

\usepackage[dvipsnames]{xcolor}
\definecolor{Bleu}{RGB}{0,0,204}
\definecolor{Violet}{RGB}{102,0,204}
\definecolor{Rouge}{RGB}{204,0,0}
\definecolor{Highlight}{RGB}{251,0,0}
\usepackage[hypertexnames=false]{hyperref}
\hypersetup{
colorlinks,
  citecolor=Bleu,
  linkcolor=Bleu,
  urlcolor=Violet} 

\usepackage{amsmath,amssymb}
\usepackage{amsthm,thmtools}

\usepackage{setspace}\doublespacing

\usepackage{enumitem}
\makeatletter
\newcommand{\mylabel}[2]{
	\addtocounter{\@listctr}{-1}%
    \refstepcounter{\@listctr}%
	#2\def\@currentlabel{#2}\label{#1}}
\makeatother

\usepackage{dsfont} 

\newtheorem{theorem}{Theorem}
\newtheorem{corollary}[theorem]{Corollary}
\newtheorem{lemma}[theorem]{Lemma}
\theoremstyle{definition}

\declaretheorem[name=Remark,qed={$\Box$},sibling=theorem]{remark} 

\newcommand{\bff}{{\bf f}}
\newcommand{\bP}{\mathbb{P}}
\DeclareMathOperator{\card}{card}
\DeclareMathOperator{\E}{\mathbb{E}}
\newcommand{\EP}{\textnormal{EP}}
\newcommand{\IF}{f}
\newcommand{\tIF}{\tilde{\IF}}
\DeclareMathOperator{\Ind}{\mathds{1}}
\newcommand{\pihat}{\widehat{\pi}}
\newcommand{\norm}[1]{\left\lVert#1\right\rVert}
\newcommand{\pibar}{\bar{\pi}}
\newcommand{\Pibar}{\overline{\Pi}}
\newcommand{\Rem}{\operatorname{Rem}}

\newcommand{\Reg}{\mathcal{R}}
\newcommand{\xA}{\mathcal{A}}
\newcommand{\xF}{\mathcal{F}}
\newcommand{\xG}{\mathbb{G}}
\newcommand{\xtF}{\tilde{\xF}}
\newcommand{\xL}{\mathcal{L}}
\newcommand{\xO}{\mathcal{O}}
\newcommand{\xP}{\mathcal{P}}
\newcommand{\xQ}{\mathcal{Q}}
\newcommand{\xR}{\mathbb{R}}
\newcommand{\xV}{\mathcal{V}}
\newcommand{\xVhat}{\widehat{\xV}}
\newcommand{\xX}{\mathcal{X}}
\newcommand{\xY}{\mathcal{Y}}


\allowdisplaybreaks

\usepackage[normalem]{ulem}
\usepackage{longtable}
\setlength{\marginparwidth}{3.25cm}
\usepackage{authblk}

\title{Faster Rates for Policy Learning} 

\author[1,2]{Alexander R.  Luedtke}

\affil[1]{\footnotesize   Vaccine  and   Infectious  Disease   Division,  Fred
  Hutchinson Cancer Research Center, USA}
\affil[2]{\footnotesize   Public Health Sciences  Division,  Fred
  Hutchinson Cancer Research Center, USA} 

\author[3,4]{Antoine Chambaz}

\affil[3]{\footnotesize  Modal'X,  UPL, Univ  Paris
  Nanterre, F92000 Nanterre France}

\affil[4]{\footnotesize Division of Biostatistics, School of Public Health, UC
  Berkeley, USA}

\date{}

\begin{document}
\maketitle

\begin{abstract}
  This article improves  the existing proven rates of regret  decay in optimal
  policy estimation.   We give  a margin-free result  showing that  the regret
  decay  for estimating  a  within-class optimal  policy  is second-order  for
  empirical risk  minimizers over Donsker  classes, with regret decaying  at a
  faster rate than  the standard error of an efficient  estimator of the value
  of  an optimal  policy.   We  also give  a  result  from the  classification
  literature  that shows  that faster  regret  decay is  possible via  plug-in
  estimation provided a margin condition holds.  Four examples are considered.
  In these examples, the regret is expressed in terms of either the mean value
  or  the median  value;  the number  of  possible actions  is  either two  or
  finitely many; and the sampling scheme is either independent and identically
  distributed or sequential,  where the latter represents  a contextual bandit
  sampling scheme.
\end{abstract}

\noindent%
{\it Keywords:} individualized treatment  rules; personalized medicine; policy
learning; precision medicine \vfill

\section{Introduction}

\subsection{Objective}

We consider  an experiment where a  player repeatedly chooses and  carries out
one  among two  actions to  receive a  random reward.   Each time,  the reward
depends on both the action undertaken and a random context preceding it, which
is given to the player before she makes her choice. The law of the context and
conditional  law  of  the  reward  given the  context  and  action  are  fixed
throughout the  experiment.  The player's  objective is  to obtain as  large a
cumulated sum of rewards as possible.

In this framework, a policy is a rule that maps any context to an action.  The
value of a policy is the expectation of the reward in the experiment where the
action carried  out is  the action  recommended by the  policy. Given  a class
$\Pi$  of candidate  policies,  the  regret of  a  policy  $\pi\in\Pi$ is  the
difference between the largest value achievable  within $\Pi$ and the value of
$\pi$.

Learning the optimal policy within a class of candidate policies is meaningful
whenever the goal is to make recommendations.  This is, for instance, the case
in personalized medicine, also known as precision medicine. There, the context
would typically  consist of the  description of  a patient, the  actions would
correspond to two strategies of treatment,  and the policies are rather called
individualized treatment rules.

The objective  of this article is  not to establish optimal  regret bounds for
optimal  policy estimators.  It is,  rather, to  show that  rates faster  than
$n^{-1/2}$ can  be demonstrated under  much more general conditions  than have
previously been discussed in the policy learning literature.

\subsection{A Brief Literature Review}

There  has  been a  surge  of  interest  in  developing flexible  methods  for
estimating optimal policies in recent years. Here we give a deeply abbreviated
overview,  and  refer  the  reader to  \citep{Athey&Wager2017}  for  a  recent
overview of the literature.  Exciting developments in policy learning over the
last     several      years     include     outcome      weighted     learning
\citep{Zhaoetal12,Zhangetal12,Rubin&vanderLaan12},     ensemble     techniques
\citep{Luedtke&vanderLaan2014},   and   empirical   risk   minimizers   (ERMs)
\citep{Athey&Wager2017}, to name  a few.  Each of these  works has established
some form of regret  bound for the estimator, with most  of these regret rates
slower  than $n^{-1/2}$.   Notable exceptions  to this  $n^{-1/2}$ restriction
occur  for  outcome  weighted  learning  (OWL) methods  under  a  hard  margin
\citep{Zhaoetal12}  and  rates  attainable by  plug-in  estimation  strategies
\citep{Farahmand2011,Luedtke&vanderLaan2015b,Chambazetal2017}.

We  give  our  results  in Section~\ref{sec:mainresults}.   Our  first  result
pertains  to empirical  risk minimization  within a  Donsker class,  where the
optimal policy  is defined  as the so-called  ``value'' maximizer  within this
class. These estimators were  recently studied in \citep{Athey&Wager2017}, and
were      also      discussed      for      continuous      treatments      in
\citep{Luedtke&vanderLaan2016b}. The  second result  links the  optimal policy
problem to the  classification literature, which has shown  that faster regret
decay  rates  are  attainable  by  plug-in  estimators  under  certain  margin
conditions \citep{Audibert&Tsybakov2007}. Section~\ref{sec:threeremarks} gives
three  remarks,  one  relating  our  results  to  the  pioneering  results  of
\citet{Koltchinskii2006},  another studying  which  term in  our regret  bound
dominates  the  rate,  and  the  third   relating  our  results  to  those  of
\citet{Athey&Wager2017}.  Section~\ref{sec:proofregDonsker} proves our results
regarding empirical risk minimization.

All  of  our  results  are  for  the regret  under  a  fixed  data  generating
distribution. The  fast rate that we  will establish for ERMs  (independent of
any margin  assumption) will not transfer  to the minimax setting  unless some
form of margin assumption is imposed. We do not give high probability results,
though  these  are  often  a   straightforward  extension  of  convergence  in
probability results when working within Donsker classes. Throughout this work,
when we  refer to Donsker  classes, we  are referring to  $P$-Donsker classes,
where $P$ is  the data generating distribution,  rather than universal/uniform
Donsker  classes \citep{Sheehy&Wellner1992}.   While our  ERM results  are not
useful for non-$P$-Donsker classes, we note  that they also serve as an oracle
guarantee for an ensemble procedure, such as \cite{Luedtke&vanderLaan2014}, so
that one need not know in advance whether or not all of the classes over which
they are optimizing are $P$-Donsker for the result to be useful.

We do not concern ourselves  with measurability issues, with the understanding
that   some   work  may   be   needed   to   make  these   arguments   precise
\citep{vanderVaartWellner1996}.

\subsection{Formalization}\label{sec:formalization}

Let $X \in \xX$ denote a vector of covariates describing the context preceding
the action, $A \in \{-1,1\}$ denote  the action undertaken, and $Y$ denote the
corresponding reward, where here larger  rewards are preferable.  We denote by
$P$ the  distribution of $O\equiv (X,A,Y)$  and by $\E$ the  expectation under
$P$.  For  simplicity we assume throughout  that, under $P$, $Y$  is uniformly
bounded and  that there exists  some $\delta  \in (0,1/2)$ so  that $P(A=1|X)$
falls in $[\delta,1-\delta]$  with probability one.  The  second assumption is
known as the strong positivity assumption.  While stronger than we need, these
assumptions simplify our analysis.

Let $\xP$ be the  class of all policies, the subset  of $L^2(P)$ consisting of
functions mapping $\xX$ to $\{-1,1\}$.  The  value of a policy $\pi\in \xP$ is
given by
\begin{equation}
  \label{eq:def:value}
  \xV(\pi)\equiv \E\left[\E[Y|A=\pi(X),X]\right].
\end{equation}
Under some causal assumptions that we will not explore here, $\xV(\pi)$ can be
identified with the mean reward if, possibly contrary to fact, action $\pi(X)$
is carried out in context $X$.

Now, let  $\Pi \subset \xP$  be the class  of candidate policies.   The regret
(within class $\Pi$) of $\pi$ is  defined as the difference between $\xV(\pi)$
and  the optimal  value  $\xV^\star\equiv  \sup_{\pi\in\Pi}\xV(\pi)$: for  all
$\pi \in \Pi$,
\begin{equation}
  \label{eq:def:regret}
  \Reg(\pi)\equiv \xV^\star - \xV(\pi).
\end{equation}
We extend  the definition of $\Reg$  to $\xP$.  Obviously, $\Reg(\pi)  \geq 0$
for all $\pi \in \Pi$, but $\Reg$ is not necessarily nonnegative over $\xP$.

For every $\pi \in \xP$, $\xV(\pi)$ can  be viewed as the evaluation at $P$ of
the functional
\begin{equation*}
  P'  \mapsto \E_{P'}\left[\E_{P'}[Y|A=\pi(X),X]\right]
\end{equation*}
from  the  nonparametric  model  of distributions  $P'$  satisfying  the  same
constraints  as   $P$  to  the   real  line.   This  functional   is  pathwise
differentiable at $P$ relative to the  maximal tangent space with an efficient
influence function $\IF_{\pi}$ given by
\begin{equation}
  \label{eq:IF:pi}
  \IF_{\pi}(o) \equiv 
  \frac{\Ind\{a=\pi(x)\}}{P(A=a|X=x)}\left(y-\E[Y|A=a,X=x]\right)            +
  \E[Y|A=\pi(x),X=x] - \xV(\pi). 
\end{equation}
By  the bound  on $Y$  and the  strong positivity  assumption, there  exists a
universal            constant            $M>0$            such            that
$ P(\sup_{\pi \in \Pi}|\IF_{\pi}(O)|\leq M)=1$.

We   refer  the   interested  reader   to~\cite[][Section~25.3]{vanderVaart98}
and~\cite{Luedtke&vanderLaan2015b} for definitions and  proofs of these facts.
Although the class
\begin{equation}
  \label{eq:IF}
  \xF  \equiv \{\IF_{\pi}  :  \pi \in  \Pi\} \subset  L^{2}(P)
\end{equation}
will play a central  role in the rest of this article, it  is not necessary to
master  the  derivation or  properties  of  efficient influence  functions  to
appreciate the contributions of this work.

\section{Main Results}\label{sec:mainresults}

\subsection{Preliminary}
\label{subsec:prelim}

Suppose    that    we    observe    mutually    independent    data-structures
$O_{1} \equiv(X_1,A_1,Y_1), \ldots,  O_{n}\equiv(X_n,A_n,Y_n)$ drawn from $P$.
Let $P_n \equiv n^{-1}\sum_{i=1}^{n} \text{Dirac}(O_{i})$ denote the empirical
measure.  For every  function $f$ mapping an observation to  the real line, we
set             $P            f\equiv             \E[f(X,A,Y)]$            and
$P_{n} f \equiv n^{-1} \sum_{i=1}^{n} f(O_{i})$.

Our theorem will attain fast rates under the assumption that one has available
an     estimator    $\{\xVhat(\pi)     :    \pi\in\Pi\}$     of
$\{\xV(\pi) : \pi\in\Pi\}$ that makes the following term small:
\begin{equation}
  \Rem_n\equiv     \sup_{\pi\in\Pi}     \left|\xVhat(\pi)     -
    \xV(\pi) - (P_n-P) \IF_{\pi}\right|. \label{eq:valest} 
\end{equation}

If empirical process  conditions are imposed on the estimators  for the reward
regression,  $\E[Y|A,X]$, and  the action  mechanism, $P(A|X)$,  then $\xVhat$
could be defined using estimating  equations \citep{vdL02} or targeted minimum
loss-based  estimation   (TMLE)  \citep{vanderLaan&Rubin06,vanderLaan&Rose11}.
Without imposing  empirical process conditions, one  could use cross-validated
estimating   equations  \citep{vanderLaan&Luedtke2016,vanderLaan&Luedtke2014},
now  also called  double machine  learning \citep{Chernozhukovetal2016},  or a
cross-validated  TMLE \citep{Zheng&vanderLaan11}.   Note that  cross-validated
estimating  equations  in  this  scenario   represent  a  special  case  of  a
cross-validated  TMLE, where  one uses  the squared  cross-validated efficient
influence function  as loss \citep{Chaffee&vanderLaan2011}.  For  any of these
listed   approaches,   obtaining   the   above  uniformity   over   $\pi$   is
straightforward if $\Pi$ is a Donsker class.

\subsection{Empirical Risk Minimizers}

Our first  objective will  be to  establish a faster  than $n^{-1/2}$  rate of
regret decay for a value-based  estimator $\pihat$ of an optimal policy,
given by any ERM $\pihat \in \Pi$ satisfying
\begin{equation}
  \xVhat(\pihat)\ge \sup_{\pi\in\Pi} \xVhat(\pi) - o_P(\Rem_n).\label{eq:valmax}
\end{equation}
Note  that \eqref{eq:valmax}  is a  requirement  on the  behavior of  the
  optimization algorithm on the realized  sample, rather than a statistical or
  probabilistic condition.   The $o_P(\Rem_n)$  term allows  us to  obtain an
approximate  solution  to  the  ERM  problem,  which  is  useful  because  the
optimization on the right is non-concave.

We now present a result that is similar to the groundbreaking results for ERMs
based   on    general   losses    given   in    \cite{Koltchinskii2006}.    In
Section~\ref{rem:relaterates}, we discuss  how our results relate  to those in
\cite{Koltchinskii2006}.

The  upcoming theorem  uses  $\Pibar$ to  denote the  closure  of $\Pi$  under
$L^2(P)$ norm.  The set
\begin{eqnarray}
  \label{eq:Pibar}
  \Pibar^\star 
  &\equiv& \{\pi^\star\in\Pibar : \xV(\pi^\star)=\xV^\star\}\\
  \notag
  &=& \{\pi^{\star} \in \Pibar : \Reg(\pi^{\star}) = 0\} \subset \{\pi^{\star}
      \in \Pibar : \Reg(\pi^{\star}) \leq 0\} 
\end{eqnarray}
also   plays   an   important   role.   If   $\Pi$   is   Donsker,   then
  Lemma~\ref{lem:Pistarnonempty}      in     Section~\ref{sec:proofregDonsker}
  guarantees that $\Pibar^\star$ is nonempty and coincides with the RHS set in
  the above display.  Finally, we define
\begin{equation*}
  \EP_n\equiv    \liminf_{s\downarrow    0}    \inf_{\pi^\star\in\Pibar^\star}
  \sup_{\pi\in            \Pi             :            \norm{\pi-\pi^\star}\le
    s}(P_n-P)(\IF_{\pihat}-\IF_{\pi}), 
\end{equation*}
which roughly  corresponds to the best  approximation in $\Pi$ of  the closest
optimal policy  $\pi^\star\in\Pibar^\star$ to  the estimated  policy $\pihat$.
We will show that $\EP_n=o_P(n^{-1/2})$ under mild assumptions.
\begin{theorem}[Main ERM Result]\label{thm:regDonsker}
  If $\pihat$ satisfies (\ref{eq:valmax}) and $\Pi$    is    Donsker, then
  \begin{equation*}
    0 \leq \Reg(\pihat)\le \EP_n + [1+o_P(1)]\Rem_n.
  \end{equation*}
  If also $\Rem_n=o_P(n^{-1/2})$,    then
  $\Reg(\pihat) = o_P(n^{-1/2})$.
\end{theorem}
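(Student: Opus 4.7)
The plan is to derive the upper bound $\Reg(\pihat) \leq \EP_n + [1+o_P(1)]\Rem_n$ by linearizing $\xV(\pi) - \xV(\pihat)$ using the estimator $\xVhat$ and its remainder $\Rem_n$, and then to extract the $o_P(n^{-1/2})$ rate for $\EP_n$ from the asymptotic equicontinuity of the empirical process on a Donsker class.

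The nonnegativity $\Reg(\pihat) \geq 0$ is immediate from $\pihat \in \Pi$ and $\xV^\star = \sup_{\pi \in \Pi}\xV(\pi)$. For the upper bound, I would fix an arbitrary $\pi^\star \in \Pibar^\star$ (nonempty by Lemma~\ref{lem:Pistarnonempty}) and an arbitrary $\pi \in \Pi$, and decompose
\begin{equation*}
\xV(\pi) - \xV(\pihat) = [\xVhat(\pi) - \xVhat(\pihat)] + (P_n - P)(\IF_{\pihat} - \IF_\pi) + [R_n(\pihat) - R_n(\pi)],
\end{equation*}
where $R_n(\pi) \equiv \xVhat(\pi) - \xV(\pi) - (P_n - P)\IF_\pi$ satisfies $\sup_{\pi \in \Pi}|R_n(\pi)| \leq \Rem_n$. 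The approximate-ERM condition \eqref{eq:valmax} controls $\xVhat(\pi) - \xVhat(\pihat) \leq o_P(\Rem_n)$, while $R_n(\pihat) - R_n(\pi)$ is absorbed by the defining supremum of $\Rem_n$. Writing $\Reg(\pihat) = [\xV^\star - \xV(\pi)] + [\xV(\pi) - \xV(\pihat)]$, the first bracket vanishes as $\pi \to \pi^\star$ in $L^2(P)$, because $\xV$ is $L^2$-continuous under strong positivity and uniform boundedness of $Y$ (changing a policy on a small-probability set perturbs $\xV$ only slightly). Taking sup over $\pi \in \Pi$ with $\|\pi - \pi^\star\| \leq s$, inf over $\pi^\star \in \Pibar^\star$, and liminf as $s \downarrow 0$ then delivers the stated bound.

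For the second conclusion, the remaining task is to show $\EP_n = o_P(n^{-1/2})$ when $\Pi$ is Donsker. I would first combine the bound above with $\Rem_n = o_P(n^{-1/2})$ and a crude $\EP_n = O_P(1)$ (from uniform boundedness of $\IF_\pi$) to get value-consistency $\Reg(\pihat) = o_P(1)$. Since Donsker classes are totally bounded in $L^2(P)$ and $\xV$ is $L^2$-continuous on $\Pibar$, value-consistency implies $d(\pihat, \Pibar^\star) \equiv \inf_{\pi^\star \in \Pibar^\star}\|\pihat - \pi^\star\| \to 0$ in probability. Pick $\pi^\star_n \in \Pibar^\star$ with $\|\pihat - \pi^\star_n\| \to 0$ in probability. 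Because the Donsker property transfers from $\Pi$ to the influence-function class $\xF = \{\IF_\pi : \pi \in \Pi\}$ (the map $\pi \mapsto \IF_\pi$ being Lipschitz in $L^2$ under strong positivity), the empirical process on $\xF$ is asymptotically equicontinuous in the $L^2(P)$ seminorm. Hence, for a suitable $s_n \downarrow 0$, $\sqrt{n}\sup_{\pi \in \Pi:\ \|\pi - \pi^\star_n\| \leq s_n}|(P_n - P)(\IF_{\pihat} - \IF_\pi)| = o_P(1)$, and plugging $\pi^\star_n$ into the defining infimum of $\EP_n$ yields $\EP_n \leq o_P(n^{-1/2})$. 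Combined with $\Reg(\pihat) \geq 0$, this gives $\Reg(\pihat) = o_P(n^{-1/2})$.

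The main obstacle I expect is the bootstrapping step in Part II: proving that value-consistency $\Reg(\pihat) = o_P(1)$ implies the $L^2$-consistency $d(\pihat, \Pibar^\star) \to 0$ in probability. This identifiability-type step rests on total boundedness of $\Pibar$ in $L^2(P)$ (a consequence of Donsker) and continuity of $\xV$ on $\Pibar$; without them one could envisage sequences $\pihat$ achieving near-optimal value while remaining at positive $L^2$ distance from every optimal policy, in which case equicontinuity would not be enough to pin down $\EP_n$ at the $o_P(n^{-1/2})$ rate.
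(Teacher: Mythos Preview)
Your derivation of the upper bound $\Reg(\pihat)\le \EP_n + [1+o_P(1)]\Rem_n$ matches the paper's almost line for line: both decompose through $\xVhat$, invoke the ERM property \eqref{eq:valmax} to discard $\xVhat(\pi)-\xVhat(\pihat)$, control the two remainders by $\Rem_n$, and use Lipschitz continuity of $\xV$ in $L^2(P)$ (the paper's Lemma~\ref{lem:VRcont}) to make $\xV^\star-\xV(\pi)$ vanish as $\pi\to\pi^\star$. (Minor point: the remainder contribution is $|R_n(\pihat)|+|R_n(\pi)|\le 2\Rem_n$, so the constant in front of $\Rem_n$ is really $2+o_P(1)$; the paper absorbs this via a $\lesssim$.)

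One small but genuine gap: your bootstrapping step claims that ``a crude $\EP_n=O_P(1)$'' together with $\Rem_n=o_P(n^{-1/2})$ yields value consistency $\Reg(\pihat)=o_P(1)$. It does not; $O_P(1)+o_P(n^{-1/2})$ is only $O_P(1)$. What you need here---and what the paper uses---is that $\xF=\{\IF_\pi:\pi\in\Pi\}$ is itself Donsker (your own later observation, and the paper's Lemma~\ref{lem:Pi:Donsker:F:Donsker}), so that $\sup_{\IF\in\xF}|\xG_n\IF|=O_P(1)$ and hence $\EP_n=O_P(n^{-1/2})$. With that fix, $\Reg(\pihat)=O_P(n^{-1/2})=o_P(1)$ and your argument proceeds.

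Your route to $\EP_n=o_P(n^{-1/2})$ is different from the paper's. You argue directly from asymptotic equicontinuity: having established $d(\pihat,\Pibar^\star)\to 0$ in probability (this is exactly the paper's Lemma~\ref{lem:cont}, which you correctly flag as the key identifiability step), you select a nearly optimal $\pi_n^\star$, plug it into the infimum defining $\EP_n$, and bound by the modulus $\sup\{|\xG_n(f-g)|:\|f-g\|\le C(\|\pihat-\pi_n^\star\|+s_n)\}$, which is $o_P(1)$ since the random radius tends to zero in probability. The paper instead packages the construction into an auxiliary function $h:\ell^\infty(\xF)\times[0,\infty)\to\xR$, proves $h$ is continuous at $(z,0)$ whenever $z$ is uniformly continuous (Lemma~\ref{lem:gsemicont} and Corollary~\ref{lem:corollary:gsemicont}), and applies the continuous mapping theorem to $(\xG_n,\Reg(\pihat)+t_n)\leadsto(\xG_P,0)$. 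Your argument is more elementary and stays closer to the equicontinuity characterization of Donsker; the paper's abstraction pays off in Section~\ref{sec:extension}, where $\widetilde{\xG}_n$ need not be the empirical process but is only assumed to converge weakly to a limit with uniformly continuous sample paths---there the continuous mapping formulation transfers unchanged, whereas a direct equicontinuity argument would have to be re-established for each $\widetilde{\xG}_n$.
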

The \hyperlink{proof:thmregDonsker}{proof of Theorem~\ref*{thm:regDonsker}} is
given in Section~\ref{sec:proofregDonsker}.  The  above rate on $\Reg(\pihat)$
is faster than the  rate of convergence of the standard  error of an efficient
estimator of the value $\xV(\pi)$ of  a policy $\pi\in\Pi$, which converges at
rate $n^{-1/2}$ in all but degenerate cases.  Note that our proof of the first
result  of the  above  theorem only  uses  that $\Pi$  is  totally bounded  in
$L^2(P)$ while that of the second  one uses the stronger assumption that $\Pi$
is   Donsker.   Three   detailed  remarks   on  this   result  are   given  in
Section~\ref{sec:threeremarks}.

\subsection{Plug-In Estimators}
Section~\ref{rem:relaterates} discusses  why we  do not  expect a  faster than
$O_P(n^{-1})$  rate of  regret decay  for $\pihat$,  even within  a parametric
model. A notable exception to this rule  of thumb occurs if $\Pi$ is of finite
cardinality and $P(A|X)$  is known, since in this case  large deviation bounds
suggest very fast rates of convergence for ERMs.  The same phenomenon has been
noted and extensively studied  in the classification literature~\citep[see the
review in][]{Tsybakov04}.  

We now show that faster rates are attainable under some conditions if one uses
a  different  estimation  procedure.   We  point  this  out  because,  in  our
experience, this  alternative estimation  strategy can sometimes  yield better
estimates than a value-based strategy \citep{Luedtke&vanderLaan2014}.

Let  $\gamma(X)\equiv  \E[Y|A=1,X]  -  \E[Y|A=-1,X]$  denote  the  conditional
average  action  effect. In  this  subsection,  we  assume  that $\Pi$  is  an
unrestricted  class, {\em  i.e.}, that  $\Pi$ contains  all functions  mapping
$\xX$ to  $\{-1,1\}$.  \citet{Audibert&Tsybakov2007} presented  the surprising
fact  that plug-in  classifiers  can  attain much  faster  rates (faster  than
$n^{-1}$). In our setting, the plug-in policy that we will study first defines
an estimator $\widehat{\gamma}$ of $\gamma$, and then determines the action to
undertake based on the sign of  the resulting estimate.  Formally, $\pihat$ is
given by
  \begin{equation*}
    \pihat(x)\equiv                \Ind\{\widehat{\gamma}(x)>0\}               -
    \Ind\{\widehat{\gamma}(x)\leq 0\}. 
  \end{equation*}
Extensions of the result  of \cite{Audibert&Tsybakov2007} have previously been
presented in both the  reinforcement learning literature \citep{Farahmand2011}
and     in     the     optimal     individualized     treatment     literature
\citep{Luedtke&vanderLaan2015b,Chambazetal2017}. We therefore  omit any proof,
and refer the reader  to~\cite[][Lemma 5.2]{Audibert&Tsybakov2007} for further
details.

The  upcoming theorem  uses $\norm{\cdot}$  to  denote the  $L^2(P)$ norm  and
$\norm{\cdot}_{\infty}$  to denote  the  $L^\infty(P)$  norm.  Let  $\lesssim$
denote  ``less than  or equal  to up  to a  universal positive  multiplicative
constant'' and consider the following ``margin assumption'':
\begin{enumerate}
\item[\mylabel{it:ma}{MA)}]           For           some           $\alpha>0$,
  $P(0<|\gamma(X)|\le t)\lesssim t^{\alpha}$ for all $t>0$.
\end{enumerate}

\begin{theorem}
  Suppose             \hyperref[it:ma]{MA}              holds.              If
  $\norm{\widehat{\gamma}-\gamma}=o_P(1)$,                                then
  $0                         \leq                         \Reg(\pihat)\lesssim
  \norm{\widehat{\gamma}-\gamma}^{2(1+\alpha)/(2+\alpha)}$.                 If
  $\norm{\widehat{\gamma}-\gamma}_{\infty}=o_P(1)$,                       then
  $0                         \leq                         \Reg(\pihat)\lesssim
  \norm{\widehat{\gamma}-\gamma}_{\infty}^{1+\alpha}$.
\end{theorem}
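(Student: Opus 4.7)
The plan is to reduce $\Reg(\pihat)$ to a weighted classification error on $\xX$ and then exploit \hyperref[it:ma]{MA} in two regimes. The first step is the standard regret identity: because $\Pi$ is unrestricted, any $\pi^\star$ agreeing with the sign of $\gamma$ on $\{\gamma\neq 0\}$ attains $\xV^\star$, and
$$\Reg(\pi) = \E\bigl[|\gamma(X)|\,\Ind\{\pi(X)\neq\pi^\star(X)\}\bigr]\quad\text{for every }\pi\in\Pi.$$
I would derive this by writing $\E[Y\mid A=\pi(X),X]=\tfrac12(\E[Y\mid A=1,X]+\E[Y\mid A=-1,X])+\tfrac{\pi(X)}{2}\gamma(X)$, subtracting the analogous expression for $\pi^\star$, and using $\pi^\star(X)\gamma(X)=|\gamma(X)|$.

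The second ingredient is a pointwise sign-comparison inequality: since $\pihat(x)$ equals $+1$ or $-1$ according to the sign of $\widehat{\gamma}(x)$ (with ties going to $-1$), whenever $\pihat(X)\neq\pi^\star(X)$ and $\gamma(X)\neq 0$, the values $\widehat{\gamma}(X)$ and $\gamma(X)$ lie on opposite sides of $0$, forcing $|\widehat{\gamma}(X)-\gamma(X)|\ge|\gamma(X)|$. Combining this containment with the regret identity yields
$$\Reg(\pihat)\le \E\bigl[|\gamma(X)|\,\Ind\{0<|\gamma(X)|\le|\widehat{\gamma}(X)-\gamma(X)|\}\bigr].$$

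The $L^\infty$ claim is then immediate: on the indicator's event, $|\gamma(X)|\le\norm{\widehat{\gamma}-\gamma}_\infty$, so pulling this out gives $\Reg(\pihat)\le\norm{\widehat{\gamma}-\gamma}_\infty\cdot P\bigl(0<|\gamma(X)|\le\norm{\widehat{\gamma}-\gamma}_\infty\bigr)$, and \hyperref[it:ma]{MA} supplies the extra $\norm{\widehat{\gamma}-\gamma}_\infty^\alpha$ factor to deliver the exponent $1+\alpha$.

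For the $L^2$ claim I would introduce a threshold $t>0$ and split the regret according to $\{|\gamma|\le t\}$ versus $\{|\gamma|>t\}$. The low-margin piece is at most $t\cdot P(0<|\gamma|\le t)\lesssim t^{1+\alpha}$ by \hyperref[it:ma]{MA}. For the high-margin piece, the bound $|\gamma|\le|\gamma|^2/t$ on $\{|\gamma|>t\}$ followed by $|\gamma|\le|\widehat{\gamma}-\gamma|$ on $\{\pihat\neq\pi^\star\}$ (from the sign-comparison inequality) yields an upper bound of $t^{-1}\norm{\widehat{\gamma}-\gamma}^2$. Balancing $t^{1+\alpha}$ against $t^{-1}\norm{\widehat{\gamma}-\gamma}^2$ via $t\asymp\norm{\widehat{\gamma}-\gamma}^{2/(2+\alpha)}$ produces the claimed $\norm{\widehat{\gamma}-\gamma}^{2(1+\alpha)/(2+\alpha)}$ rate. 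The only nontrivial step is this balancing; everything else is routine transport of the $\widehat{\gamma}$-error into regret through the two elementary identities above.
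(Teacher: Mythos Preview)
Your proof is correct and follows exactly the standard Audibert--Tsybakov route. The paper does not give its own proof of this theorem: it explicitly omits the argument and refers the reader to \cite[Lemma~5.2]{Audibert&Tsybakov2007}, which is precisely the regret identity plus sign-comparison plus threshold-splitting argument you have written out.
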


We now briefly describe this result, though we note that it has been discussed
thoroughly                                                           elsewhere
\citep{Audibert&Tsybakov2007,Luedtke&vanderLaan2015b,Chambazetal2017}.    Note
that  \hyperref[it:ma]{MA} does  not  place any  restriction  on the  decision
boundary $\{x  \in \xX  : \gamma(x)=0\}$  where no action  is superior  to the
other,  but  rather  only  places   a  restriction  on  the  probability  that
$\gamma(X)$ is near zero.

If  $\alpha=1$ and  $\gamma(X)$  is absolutely  continuous  (under $P$),  then
\hyperref[it:ma]{MA} corresponds  to $\gamma(X)$  having bounded  density near
zero. The  case that  $\alpha=1$ is  of particular  interest for  the sup-norm
result because then  the regret bound is quadratic in  the rate of convergence
of $\widehat{\gamma}$  to $\gamma$.   As $\alpha\rightarrow  0$, more  mass is
placed near the decision boundary (zero)  and the above result yields the rate
of     convergence     of      $\widehat{\gamma}$     to     $\gamma$.      As
$\alpha\rightarrow \infty$, a vanishingly small amount of mass is concentrated
near zero  and the above result  recovers very fast rates  of convergence when
$\widehat{\gamma}$ converges to $\gamma$ uniformly.

\begin{remark}[Estimating $\gamma$]
  Doubly  robust unbiased  transformations \citep{Rubin&vanderLaan06}  provide
  one way to estimate $\gamma$ (Section 3.1 of \citep{vanderLaan&Luedtke14b}).
  In   particular,  one   can  regress   (via  any   desired  algorithm)   the
  pseudo-outcome
  \begin{equation*}
    \widehat{\Gamma}(O)\equiv
    \frac{A}{\widehat{P}(A|X)}          \left(Y         -
      \widehat{\E}[Y|A,X]\right)        +       \widehat{\E}[Y|A=1,X]        -
    \widehat{\E}[Y|A=-1,X] 
  \end{equation*}
  against $X$,  both for  double robustness and  for efficiency  gains.  Above
  $\widehat{P}(A|X)$   and  $\widehat{\E}[Y|A,X]$   are  estimators   of
  $P(A|X)$ and $\E[Y|A,X]$.  One can  use cross-validation to avoid dependence
  on empirical  process conditions  for the  estimators of  $P(A|X)$ and
  $\E[Y|A,X]$.
\end{remark}

\section{Three                            Remarks                           on
  Theorem~\ref{thm:regDonsker}}\label{sec:threeremarks}

\subsection{Relation            to           the            Rates           of
  \citet{Koltchinskii2006}}\label{rem:relaterates}

Our         Theorem~\ref{thm:regDonsker}         is         related         to
\cite[][Theorem~4]{Koltchinskii2006} and  to the discussion  of classification
problems in Section~6.1  of this landmark article, although we  {\em (i)} make
no  assumptions  on  the  behavior  of $\gamma$  near  the  decision  boundary
(``margin assumptions''),  {\em (ii)} give  a fixed-$P$ rather than  a minimax
result, {\em  (iii)} do not require  that the regret minimizer  belongs to the
set $\Pi$, and {\em (iv)} make a slightly weaker complexity requirement on the
class $\Pi$  (only requiring $\Pi$  Donsker).  We thus have  weaker conditions
(only constrain  the complexity  of $\Pi$ using  a general  Donsker condition)
and, consequently, a weaker implication.  Our result also differs from that of
\cite{Koltchinskii2006} due to the need for  us to control the extra remainder
term arising from \eqref{eq:valest}.

Our          Lemmas~\ref{lem:cont}         and~\ref{lem:gsemicont}          in
Section~\ref{sec:proofregDonsker}  are key  to giving  this general  ``Donsker
implies fast rate'' implication, even when  $\Pi$ does not achieve the infimum
on the regret.  Once this crucial lemma  is established and one has dealt with
the existence of  the $\Rem_n$ remainder term, one could  use simlar arguments
to those  used in  \cite[][Section~4]{Koltchinskii2006} to control  the regret
(though, the proofs in our work are self-contained).  We were not able to find
a  similar result  in  the  classification literature  that  shows that  $\Pi$
Donsker  suffices  to attain  a  fast  rate  on misclassification  error  (the
classification analogue of our regret).  A  result that makes a similarly weak
complexity requirement (only  uses a Donsker condition) was  given for general
result for ERMs in \cite[][Theorem 4.5]{Koltchinskii2009}. In particular, that
result shows that, if $\Pi$ is Donsker and $\Pibar^\star$ is a singleton, then
one attains  a fast rate.   In the policy learning  context, this is  a weaker
result than what we have shown: $\Pi$ Donsker implies a fast rate, without any
assumption on the cardinality of $\Pibar^\star$.  We note also that our result
could readily be  extended to standard classification problems: the  lack of a
remainder term $\Rem_n$ only makes the problem easier.

\subsection{Tightening Theorem~\ref{thm:regDonsker}}\label{rem:tightening}

Tightening Theorem~\ref{thm:regDonsker} would require careful consideration of
the  rate of  convergence of  two $o_P(n^{-1/2})$  terms, namely  $\Rem_n$ and
$\EP_n$.  On the one hand, if one uses a cross-validated estimator, then
the rate  of $\Rem_n$ is  typically dominated by the  rate of a  doubly robust
term, which is in turn upper bounded by the product of the $L^2(P)$ norm
rates of convergence of the  estimated action mechanism and reward
regression. It  is thus  sufficient  to  assume  that this  product  is
  $o_P(n^{-1/2})$  to  guarantee  that $\Rem_n=o_P(n^{-1/2})$.   It  is  worth
  noting  that the  product is  $O_P(n^{-1})$ in  a well-specified  parametric
  model.  On the other hand, the  magnitude of $\EP_n$ is controlled by
both the size  of class $\Pi$ and  the behavior of $\gamma$  near the decision
boundary,    hence   the    need   for    a   margin    assumption   like
  \hyperref[it:ma]{MA}. 

We do not believe there is a  general ordering between the rate of convergence
of $\Rem_n$ and $\EP_n$  that applies across all problems, and  so it does not
appear that the size of $\Pi$ nor  the use of an efficient choice of influence
function fully  determines the  rate of  regret decay  of $\pihat$,  even when
$\Rem_n=o_P(n^{-1/2})$.  A notable  exception to this lack  of strict ordering
between $\Rem_n$ and $\EP_n$ occurs  when the action mechanism is known:
in  this  case,  the size  of  $\Pi$  and  the  behavior of  $\gamma$  on  the
decision  boundary fully  control the  rate of  regret decay  whenever a
cross-validated estimator is used.  We also note  that, as far as we can tell,
there does  not appear  to be any  cost to using  an efficient  value function
estimator  when the  model is  nonparametric.  It  is not  clear if  using the
efficient  influence  function  is   always  preferred  in  more  restrictive,
semiparametric models:  there may need  to be  a careful tradeoff  between the
efficiency of the influence function and the corresponding $\Rem_n$.

\subsection{Relation to Results of \citet{Athey&Wager2017}}\label{sec:AtheyWager}

In       the       recent       technical       report~\cite{Athey&Wager2017},
\citeauthor{Athey&Wager2017} showed  that policy learning regret  rates on the
order  of $O_P(n^{-1/2})$  are  attainable by  ERMs.  High-probability  regret
upper bounds were derived, with leading constants that scale with the standard
error  of a  semiparametric efficient  estimator for  policy evaluation.   The
authors argue that this leading  constant demonstrates the importance of using
semiparametric efficient value estimators to define the empirical risk used by
their estimator.   As we  discussed in Section~\ref{rem:tightening},  we fully
agree  with the  importance  of  using efficient  estimators  to estimate  the
empirical risk.  Nonetheless,  the present work shows that the  regret of ERMs
decays faster than  the standard error of an efficient  estimator of the value
and so, the regret  of ERMs that use this empirical  risk will not necessarily
scale with the standard error of this estimator.

Like  in~\citep{Athey&Wager2017}, our  results are  given under  a fixed  data
generating distribution $P$.  We implicitly  leverage the behavior of $\gamma$
near the  decision boundary  (zero) under  this distribution  $P$.  Crucially,
there is  a problem-dependent constant that  can be made arbitrarily  large if
one chooses  a $P$ for which  $\gamma(X)$ concentrates a large  amount of mass
near  the decision  boundary.   Hence, the  minimax rate  is  not faster  than
$n^{-1/2}$ unless one  constrains the class of distributions to  which $P$ can
belong via  a margin condition (see  \citep{Tsybakov04,Koltchinskii2006}). The
implication of this observation is encouraging:  it would not be surprising to
find that, without  a margin condition, the minimax regret  does in fact decay
at the  rate of  the standard error  of an efficient  estimator of  an optimal
policy.   The leading  constant may  also critically  depend on  the efficient
variance  of   such  an   estimator.   It  is   worth  studying   whether  the
high-probability,  finite sample  results  in  \citep{Athey&Wager2017} can  be
extended  in  this direction,  thereby  demonstrating  a criterion  for  which
efficient value  estimation is mathematically indispensable  for obtaining the
optimal regret decay.

\section{Extension of Main ERM Result and Three Examples}\label{sec:extension}

\subsection{Higher Level Result}
\label{subsec:higher}

Suppose we observe $(O_1,\ldots,O_n)$ drawn from a distribution $\nu_n$, where
each     $O_i     \equiv     (X_i,A_i,Y_i)$    takes     its     values     in
$\xO \equiv  \xX \times \xA  \times \xY$ with  $\xA \subset [-1,1]$.   Like in
Section~\ref{sec:formalization}, $X_{i}\in \xX$ denotes a vector of covariates
describing the context preceding the $i$th action, $A_{i} \in \xA$ denotes the
action undertaken  in this context  and $Y_{i}  \in \xY$ is  the corresponding
reward.  Unlike in Section~\ref{sec:formalization}, there may be more than two
actions and the  observations are not necessarily  independent and identically
distributed   (i.i.d.).     This   extends    the   setting    introduced   in
Section~\ref{sec:formalization}, which can be  recovered with $\xA = \{-1,1\}$
and $\nu_n$ equal to a product  measure.  Throughout we also assume that there
exists a distribution $P$ with support on  $\xO$ that will have to do with our
limit process.  The requirements of this distribution $P$ will become clear in
what  follows and  the worked  examples of  Sections~\ref{subsec:discr:action},
\ref{subsec:median} and
\ref{subsec:seq:des}.  We let $\norm{\cdot}$ denote the $L^2(P)$ norm.\\

Let $\Pi$  be a  class of policies,  {\em i.e.}, a  set of  mappings from
  $\xX$ to $\xA$, and let $\Pibar$  be its $L^2(P)$ closure.  We request that
$\Pi$ is not too large, in the sense that
\begin{equation}
  \Pi\textnormal{     is     totally     bounded     with     respect     to
    $\norm{\cdot}$.} \label{eq:Pitb} 
\end{equation}

The value of a policy $\pi\in\Pibar$  is quantified via $\xV(\pi)$.  As in our
earlier result, the regret is defined as $\Reg(\pi)\equiv \xV^\star-\xV(\pi)$,
where $\xV^\star\equiv \sup_{\pi\in\Pi}\xV(\pi)$.

We also introduce the condition
\begin{equation}
  \xV(\cdot)\textnormal{  is uniformly  continuous  on $\Pibar$  with
    respect to $\norm{\cdot}$.} \label{eq:valunifcons} 
\end{equation}

We        assume        that         there        exists        a        class
$\{\IF_\pi  : \pi\in\Pibar\}  \subset L^{2}  (P)$  of mappings  from $\xO$  to
$[-M,M]$    ($M$     not    relying    on    $\pi$)     and    an    estimator
$\{\widehat{\xV}(\pi) :  \pi \in \Pi\}$ of  $\{\xV(\pi) : \pi \in  \Pi\}$ such
that, for a (possibly stochastic) rate $r_n\rightarrow +\infty$,
\begin{gather}
  \label{eq:IFuc}
  \pi\mapsto \IF_{\pi}\textnormal{ is uniformly continuous from
    $\Pibar$ to $L^2(P)$, \quad and} \\
  \label{eq:estseq}
  \sup_{\pi\in\Pi}     \left|r_n\left(\widehat{\xV}(\pi)-\xV(\pi)\right)     -
    \widetilde{\xG}_n \IF_{\pi}\right|=o_P(1).
\end{gather}
In  \eqref{eq:IFuc},  both  spaces   are  equipped  with  $\norm{\cdot}$.   In
\eqref{eq:estseq},  $\widetilde{\xG}_n\in\ell^{\infty}(\xF)$  is a  stochastic
process on $\xF \equiv  \{\IF_\pi : \pi\in\Pi\}$ that may or  may not be equal
to the empirical process, but must satisfy
\begin{equation}
  \widetilde{\xG}_n\leadsto                 \widetilde{\xG}_P\textnormal{                 in
  }\ell^{\infty}(\xF),\label{eq:unifconv} 
\end{equation}
where almost all sample paths  of $\widetilde{\xG}_P$ are uniformly continuous
with respect to $\norm{\cdot}$.

Finally we also assume that, for  the same rate $r_n$ as in (\ref{eq:estseq}),
$\pihat\in \Pi$ satisfies the ERM property
\begin{equation}
  \widehat{\xV}(\pihat)\le  \inf_{\pi\in\Pi}   \widehat{\xV}(\pi)  +
  o_P(r_n^{-1}). \label{eq:generalERM} 
\end{equation}
Like \eqref{eq:valmax}, \eqref{eq:generalERM} is a requirement on the behavior
of  the  optimization  algorithm  on   the  realized  sample,  rather  than  a
statistical  or probabilistic  condition.   The  following result  generalizes
Theorem~\ref{thm:regDonsker}.

\begin{theorem}[More General ERM Result]\label{thm:generalRegDonsker}
  If    (\ref{eq:Pitb})     through    (\ref{eq:generalERM})     hold,    then
  $\Reg(\pihat)=o_P(r_n^{-1})$.
\end{theorem}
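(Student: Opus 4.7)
The plan is to upper-bound $r_n \Reg(\pihat)$ via a four-term decomposition in which two terms vanish because of the ERM property and the empirical-process expansion, one is killed by carefully approximating an optimal limit $\pi^\star \in \Pibar^\star$ by an element of $\Pi$, and the remaining stochastic term is controlled by asymptotic equicontinuity. The master identity is that for any $\pi' \in \Pi$,
\[
\Reg(\pihat) = \bigl[\xV^\star - \xV(\pi')\bigr] + \bigl[\xV(\pi') - \widehat{\xV}(\pi')\bigr] + \bigl[\widehat{\xV}(\pi') - \widehat{\xV}(\pihat)\bigr] + \bigl[\widehat{\xV}(\pihat) - \xV(\pihat)\bigr].
\]
By~\eqref{eq:generalERM} (read with the sign convention matching maximization of $\widehat{\xV}$, consistent with Theorem~\ref{thm:regDonsker}), the third bracket contributes at most $o_P(r_n^{-1})$; by~\eqref{eq:estseq} applied uniformly, the sum of the second and fourth brackets equals $r_n^{-1}\widetilde{\xG}_n[\IF_{\pihat} - \IF_{\pi'}] + o_P(r_n^{-1})$. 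Multiplying through by $r_n$ yields the master inequality
\[
r_n \Reg(\pihat) \le r_n [\xV^\star - \xV(\pi')] + \widetilde{\xG}_n[\IF_{\pihat} - \IF_{\pi'}] + o_P(1).
\]

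First I would note that $\Pibar^\star \neq \emptyset$: total boundedness~\eqref{eq:Pitb} makes $\Pibar$ compact in $L^2(P)$, and uniform continuity of $\xV$~\eqref{eq:valunifcons} then forces the supremum to be attained. Next I would derive consistency $\Reg(\pihat) = o_P(1)$ from the master inequality: by~\eqref{eq:unifconv}, $\widetilde{\xG}_n$ is asymptotically tight in $\ell^\infty(\xF)$ so $\sup_{\pi \in \Pi}|\widetilde{\xG}_n \IF_\pi| = O_P(1)$, and for any $\varepsilon > 0$ one can choose $\pi' \in \Pi$ with $\xV^\star - \xV(\pi') < \varepsilon$, giving $\Reg(\pihat) \le \varepsilon + O_P(r_n^{-1})$. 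A standard sub-subsequence argument then upgrades value-consistency to $L^2(P)$-consistency, namely $d_n \equiv \inf_{\pi^\star \in \Pibar^\star}\|\pihat - \pi^\star\| = o_P(1)$: any subsequence of $\pihat$ has a further subsequence converging in $\Pibar$ by compactness, the limit satisfies $\xV(\text{limit})=\xV^\star$ by continuity of $\xV$, and therefore lies in $\Pibar^\star$.

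Next, I would construct a suitable approximator $\pi_n \in \Pi$. Pick $\pi_n^\star \in \Pibar^\star$ with $\|\pihat - \pi_n^\star\| \le d_n + 1/n$, and choose $\pi_n \in \Pi$ with $\|\pi_n - \pi_n^\star\|$ shrinking fast enough that the uniform modulus of continuity of $\xV$ on $\Pibar$ forces $r_n [\xV^\star - \xV(\pi_n)] = o(1)$; this is possible since the modulus vanishes at $0$ and $\Pi$ is dense in $\Pibar$. The triangle inequality gives $\|\pihat - \pi_n\| = o_P(1)$, and~\eqref{eq:IFuc} then yields $\|\IF_{\pihat} - \IF_{\pi_n}\|_{L^2(P)} = o_P(1)$. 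Substituting $\pi' = \pi_n$ into the master inequality reduces everything to showing $\widetilde{\xG}_n[\IF_{\pihat} - \IF_{\pi_n}] = o_P(1)$.

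The main obstacle is exactly this residual control, since $\pihat$ and $\pi_n$ are random. The route is to invoke that weak convergence of $\widetilde{\xG}_n$ in $\ell^\infty(\xF)$ to a process with uniformly continuous sample paths is equivalent to asymptotic $\|\cdot\|$-equicontinuity of $\{\widetilde{\xG}_n\}$: for every $\eta>0$, $\lim_{\delta \downarrow 0}\limsup_n P^*(\sup\{|\widetilde{\xG}_n(f-g)| : f,g \in \xF,\ \|f-g\|<\delta\} > \eta) = 0$. Partitioning on the event $\{\|\IF_{\pihat} - \IF_{\pi_n}\| < \delta\}$ for a slowly shrinking $\delta$ and combining with $\|\IF_{\pihat} - \IF_{\pi_n}\| = o_P(1)$ then yields $\widetilde{\xG}_n[\IF_{\pihat} - \IF_{\pi_n}] = o_P(1)$. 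I expect the technical bulk to live in making this equicontinuity step rigorous (and measurable) for random arguments, parallel to the role played by the lemmas from Section~\ref{sec:proofregDonsker} in the proof of Theorem~\ref{thm:regDonsker}, rather than in any of the algebraic steps above.
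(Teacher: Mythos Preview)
Your proposal is correct. The master decomposition and the use of compactness of $\Pibar$ to obtain $\Pibar^\star\neq\emptyset$ and $L^2(P)$-consistency of $\pihat$ match the paper; note that the paper packages your sub-subsequence step as the deterministic implication \eqref{eq:ca} of Lemma~\ref{lem:cont}, which you could simply quote. As written, your step mixes the random sequence $\pihat_n$ with a pointwise-in-$\omega$ compactness extraction and would read more cleanly if you first stated the deterministic fact (small regret implies small $L^2(P)$-distance to $\Pibar^\star$) and then applied it to $\pihat$.

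Where you genuinely differ from the paper is in handling the residual $\widetilde{\xG}_n[\IF_{\pihat}-\IF_{\pi_n}]$. You appeal directly to asymptotic $\norm{\cdot}$-equicontinuity of $\{\widetilde{\xG}_n\}$, which is available from \eqref{eq:unifconv} because almost all sample paths of the limit are $\norm{\cdot}$-uniformly continuous and $\xF$ is $\norm{\cdot}$-totally bounded (via \eqref{eq:Pitb} and \eqref{eq:IFuc}); combining with $\norm{\IF_{\pihat}-\IF_{\pi_n}}=o_P(1)$ and partitioning on the event $\{\norm{\IF_{\pihat}-\IF_{\pi_n}}<\delta\}$ finishes. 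The paper instead introduces auxiliary maps $g,h:\ell^\infty(\xF)\times[0,\infty)\to\xR$ (Lemma~\ref{lem:gsemicont} and Corollary~\ref{lem:corollary:gsemicont}), bounds $r_n\Reg(\pihat)$ by $h(\widetilde{\xG}_n,\Reg(\pihat)+t_n)$, proves $h$ is continuous at $(z,0)$ for uniformly continuous $z$, and then invokes the continuous mapping theorem on $(\widetilde{\xG}_n,\Reg(\pihat)+t_n)\leadsto(\widetilde{\xG}_P,0)$. Your route is more elementary and sidesteps the $g,h$ machinery; the paper's route absorbs the randomness of $\pihat$ into a single continuous-mapping invocation at the cost of setting up those lemmas. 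Neither approach yields a sharper conclusion here.
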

A  sketch of  the  \hyperlink{proof:thmgeneralRegDonsker}{proof}  is given  in
Appendix~\ref{app:generalERM}.  We only outline  where the proof would deviate
from that of Theorem~\ref{thm:regDonsker}.

\begin{remark}
  If  $\widetilde{\xG}_P$ is  equal  to the  zero-mean  Gaussian process  with
  covariance  given   by  $\E[\xG_P  f  \xG_P   g]  =  Pfg  -   Pf  Pg$,  then
  $\widetilde{\xG}_P$ has almost surely uniformly continuous sample paths.  In
  particular,   \cite[][Example  1.5.10]{vanderVaartWellner1996}   shows  that
  $\widetilde{\xG}_P$ has almost surely uniformly continuous sample paths with
  respect to the  standard deviation semimetric, and Section 2.1  in that same
  reference  shows  that, for  bounded  $\mathcal{F}$,  one can  replace  this
  semimetric by that on $L^2(P)$.
\end{remark}

In the remainder of  this section, we give a taste  of the broad applicability
of this result via three examples.  The first example is a simple extension of
the framework of  Section~\ref{sec:mainresults} that allows for  more than two
possible actions.   The second example  substitutes the median reward  for the
mean  reward~\citep[for a  similar setting,  see][]{Linnetal2016}.  The  third
example  focuses  again  on  the  mean reward,  but  considers  a  non-i.i.d.,
contextual-bandit-type  setting  in  which  context-specific  actions  may  be
informed by earlier observations \citep{Chambazetal2017}.

\subsection{Example~1: Maximizing the Mean Reward of a Discrete Action} 
\label{subsec:discr:action}

In  this example,  there  are  $\card(\xA) \in  [2,  \infty)$ (finitely  many)
candidate  actions  to   undertake  (think  of  a  discretized   dose  in  the
personalized medicine framework).  Without loss  of generality, we assume that
$\mathcal{A}\subset  [0,1]$.  We  assume that  under the  distribution $P$  of
$O\equiv (X,A,Y)$, the  reward $Y$ is uniformly bounded and  there exists some
$\delta>0$ so  that $\min_{a\in\xA}P(A=a|X)\geq \delta$ with  probability one.
Here   too,   the    value   of   a   policy   $\pi\in    \Pibar$   is   given
by~\eqref{eq:def:value},   the  (within   class   $\Pi$)   optimal  value   is
$\xV^{\star}  \equiv   \sup_{\pi  \in  \Pi}   \xV(\pi)$  and  the   regret  of
$\pi \in \Pibar$ is defined  as in~\eqref{eq:def:regret}.  Finally, we observe
$O_{1}  \equiv  (X_{1},  A_{1},  Y_{1}), \ldots,  O_{n}\equiv  (X_{n},  A_{n},
Y_{n})$ drawn i.i.d. from $P$.

For  each $\pi  \in  \Pibar$ and  $o  \in  \xO$, let  $f_{\pi}  (o)$ be  given
by~\eqref{eq:IF:pi}.                                                       Let
$\widetilde{\xG}_{n} \equiv  n^{1/2} (P_{n} -  P) \in \ell^{\infty}  (\xF)$ be
the  empirical process  on $\xF  =  \{\IF_{\pi} :  \pi \in  \Pi\}$.  Note  the
parallel between the LHS of  \eqref{eq:estseq} with $r_{n} \equiv n^{1/2}$ and
$n^{1/2} \Rem_{n}$ from~\eqref{eq:valest}.

We prove the next lemma in Section~\ref{app:proof:discr:action}:
\begin{lemma}
  \label{lem:discr:action}
  If   $\Pi$  is   Donsker,   then  \eqref{eq:Pitb},   \eqref{eq:valunifcons},
  \eqref{eq:IFuc} and \eqref{eq:unifconv} are met.
\end{lemma}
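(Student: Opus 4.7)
The plan is to verify the four conditions in turn, with condition~\eqref{eq:unifconv} requiring most of the work. Condition~\eqref{eq:Pitb} is immediate, since any $P$-Donsker class is totally bounded in $L^{2}(P)$.

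Conditions~\eqref{eq:valunifcons} and~\eqref{eq:IFuc} both reduce to a single elementary estimate that exploits the finiteness of $\xA$. Setting $c \equiv \min_{a\neq a'\in\xA}(a-a')^{2}$, which is strictly positive because $\xA$ is finite, I would observe that for any $\xA$-valued $\pi,\pi'$ one has pointwise $\Ind\{\pi(x)\neq\pi'(x)\}\le c^{-1}(\pi(x)-\pi'(x))^{2}$, hence $P(\pi(X)\neq\pi'(X))\le c^{-1}\norm{\pi-\pi'}^{2}$. Any $\pi\in\Pibar$ is an $L^{2}(P)$-limit of $\xA$-valued functions and therefore takes values in $\xA$ almost surely, so the bound extends to $\pi,\pi'\in\Pibar$. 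Combined with the boundedness of $Q(a,x)\equiv\E[Y|A=a,X=x]$ inherited from that of $Y$, this gives $|\xV(\pi)-\xV(\pi')|\lesssim\norm{\pi-\pi'}^{2}$ uniformly on $\Pibar\times\Pibar$, establishing~\eqref{eq:valunifcons}. Combined further with strong positivity, which makes $P(A=a|X)^{-1}$ uniformly bounded, the same ingredients yield $\norm{\IF_\pi-\IF_{\pi'}}\lesssim\norm{\pi-\pi'}$ uniformly, implying~\eqref{eq:IFuc}.

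The main obstacle is~\eqref{eq:unifconv}: I would prove that $\xF=\{\IF_\pi:\pi\in\Pi\}$ is $P$-Donsker, from which the uniform boundedness of $\xF$ together with the remark following Theorem~\ref{thm:generalRegDonsker} supplies weak convergence to a tight Gaussian limit with almost surely uniformly continuous sample paths in the $L^{2}(P)$ semimetric. The strategy is to rewrite
\[ \IF_\pi(o) = \sum_{\alpha\in\xA}\Ind\{\pi(x)=\alpha\}\,h_\alpha(o) - \xV(\pi), \]
where $h_\alpha(o)\equiv\Ind\{a=\alpha\}P(A=\alpha|X=x)^{-1}(y-Q(\alpha,x))+Q(\alpha,x)$ is a fixed, uniformly bounded, $P$-measurable function not depending on $\pi$. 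Since $\xA$ is finite, for each $\alpha\in\xA$ there exists a Lipschitz $\tilde\phi_\alpha:\xR\to\xR$ that agrees with $a'\mapsto\Ind\{a'=\alpha\}$ on $\xA$, so $\tilde\phi_\alpha\circ\pi=\Ind\{\pi=\alpha\}$ for every $\xA$-valued $\pi$. Standard preservation results for Donsker classes (see~\citep[Ch.~2.10]{vanderVaartWellner1996}) then apply in sequence: Lipschitz transformation keeps $\{\Ind\{\pi=\alpha\}:\pi\in\Pi\}$ Donsker; pointwise multiplication by the fixed bounded function $h_\alpha$ keeps the resulting class Donsker; Minkowski summation over the finite set $\xA$ keeps it Donsker; and adding the class of constants $\{-\xV(\pi):\pi\in\Pi\}$ keeps it Donsker. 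Hence $\xF$ is $P$-Donsker, as desired.
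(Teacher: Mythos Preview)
Your proof is correct and follows essentially the same approach as the paper: both exploit the finiteness of $\xA$ to replace action indicators by Lipschitz functions of $\pi$ (your $\tilde\phi_\alpha$ plays the role of the paper's bump function $\theta(\cdot-a)$), and both then invoke the Donsker preservation results of \citep[Ch.~2.10]{vanderVaartWellner1996} to carry the Donsker property from $\Pi$ to $\xF$. Your direct indicator bound $\Ind\{\pi\neq\pi'\}\le c^{-1}(\pi-\pi')^{2}$ even yields a slightly sharper quadratic modulus for $\xV$, but otherwise the arguments coincide.
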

Therefore, Theorem~\ref{thm:generalRegDonsker} yields the following corollary:
\begin{corollary}
  In the  context of  Section~\ref{subsec:discr:action}, suppose  that $\pihat$
  satisfies \eqref{eq:generalERM} with $r_{n}\equiv n^{1/2}$ and that
  \begin{equation*}
    \sup_{\pi\in\Pi} \left|n^{1/2}\left(\widehat{\xV}(\pi)-\xV(\pi)\right)
      - \widetilde{\xG}_n \IF_{\pi}\right|=o_P(1). 
  \end{equation*}
  If $\Pi$ is Donsker, then $\Reg(\pihat) = o_{P} (n^{-1/2})$.
\end{corollary}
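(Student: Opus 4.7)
The plan is to recognize that this corollary is a routine consequence of Theorem~\ref{thm:generalRegDonsker}: once the six structural hypotheses \eqref{eq:Pitb}--\eqref{eq:generalERM} of that theorem are verified in the present i.i.d., finitely-many-actions setting, the stated $o_P(n^{-1/2})$ rate follows immediately with $r_n\equiv n^{1/2}$. So the work is bookkeeping, matching each hypothesis to something already in hand.

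First I would invoke Lemma~\ref{lem:discr:action}. Because $\Pi$ is assumed Donsker, that lemma delivers the four conditions that are purely about the geometry of $\Pi$ and of the influence function class: total boundedness~\eqref{eq:Pitb}, uniform continuity of $\xV(\cdot)$ on $\Pibar$~\eqref{eq:valunifcons}, uniform continuity of $\pi\mapsto \IF_\pi$~\eqref{eq:IFuc}, and weak convergence $\widetilde{\xG}_n \leadsto \widetilde{\xG}_P$ in $\ell^\infty(\xF)$~\eqref{eq:unifconv} with the standard $P$-Brownian-bridge limit, whose sample paths are almost surely uniformly $\|\cdot\|$-continuous (by Example~1.5.10 of \citet{vanderVaartWellner1996}, as noted in the remark just after Theorem~\ref{thm:generalRegDonsker}, using that $\xF$ is uniformly bounded by~$M$ so the $L^2(P)$ and standard-deviation semimetrics are equivalent on $\xF$).

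Next, I would check the two remaining conditions directly from the hypotheses of the corollary. Condition~\eqref{eq:estseq}, namely
\[
\sup_{\pi\in\Pi} \left|n^{1/2}\left(\widehat{\xV}(\pi)-\xV(\pi)\right) - \widetilde{\xG}_n \IF_{\pi}\right|=o_P(1),
\]
is exactly what the corollary assumes, with $r_n\equiv n^{1/2}$ and $\widetilde{\xG}_n$ the empirical process $n^{1/2}(P_n-P)$ on $\xF$. Condition~\eqref{eq:generalERM}, the approximate ERM property at rate $r_n^{-1}=n^{-1/2}$, is also assumed.

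With all of \eqref{eq:Pitb}--\eqref{eq:generalERM} verified, Theorem~\ref{thm:generalRegDonsker} gives $\Reg(\pihat)=o_P(r_n^{-1})=o_P(n^{-1/2})$, which is the desired conclusion. There is no real obstacle here beyond matching notation: the only small subtlety worth a sentence in the write-up is the identification of the weak limit $\widetilde{\xG}_P$ as the $P$-Brownian bridge on the bounded Donsker class $\xF$, which is what guarantees the almost-sure uniform $\|\cdot\|$-continuity of the limit paths that Theorem~\ref{thm:generalRegDonsker} requires.
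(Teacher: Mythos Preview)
Your proposal is correct and matches the paper's approach exactly: the paper presents this corollary as an immediate consequence of Theorem~\ref{thm:generalRegDonsker}, with Lemma~\ref{lem:discr:action} supplying \eqref{eq:Pitb}, \eqref{eq:valunifcons}, \eqref{eq:IFuc}, \eqref{eq:unifconv} and the corollary's hypotheses supplying \eqref{eq:estseq} and \eqref{eq:generalERM}. Your added remark about the almost-sure uniform continuity of the Brownian-bridge limit is precisely the content of the remark following Theorem~\ref{thm:generalRegDonsker}, so nothing is missing.
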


\subsection{Example~2: Maximizing the Median Reward of a Binary Action}
\label{subsec:median}

In this example, we use the same  i.i.d. (from $P$) observed data structure as
in Section~\ref{sec:formalization}, including the strong positivity assumption and the bounds on $A$ and $Y$.    For   every   policy   $\pi   \in   \xP$,   define
$F_\pi              :\xR\rightarrow\xR$              pointwise              by
$F_\pi(m)\equiv  \E\left[P(Y\le  m|A=\pi(X),X)\right]$.    Under  some  causal
assumptions, $F_{\pi}$ is  the cumulative distribution function  of the reward
in the  counterfactual world where  action $\pi(x)$  is taken in  each context
$x \in \xX$.

We define the value  of $\pi$ as the median rather than  the mean reward, {\em
  i.e.}, as
\begin{equation}
  \label{eq:def:median}
  \xV(\pi)\equiv       \inf\left\{m\in\xR        :       1/2\le
    F_{\pi}(m)\right\}. 
\end{equation}
Let $\Pi \subset  \xP$ be the class of candidate  policies. Recall that the regret (within
class    $\Pi$)    of    $\pi    \in     \Pi$    takes    the    form
$\Reg(\pi)\equiv    \sup_{\pi\in\Pi}\xV(\pi)-\xV(\pi)$.

Let us now turn to \eqref{eq:valunifcons}. Assume that there exists $c>0$ such
that, for each $\pi \in \Pi$,  $F_{\pi}$ is continuously differentiable in the
neighborhood $[\xV(\pi)\pm  c]$ with  derivative $\dot{F}_{\pi}(m)$ at  $m$ in
this neighborhood, where
\begin{equation}
  0<\inf_{\pi\in\Pibar}\inf_{m\in        [\xV(\pi)\pm        c]}
  \dot{F}_{\pi}(m)\le                    \sup_{\pi\in\Pibar}\sup_{m\in
    [\xV(\pi)\pm c]} \dot{F}_{\pi}(m)<\infty. \label{eq:derivbd} 
\end{equation}
In     addition,      assume     that     there     exists      a     function
$\omega           :            [0,\infty)\rightarrow[0,\infty)$           with
$\lim_{m\downarrow 0}\omega(m)=\omega(0)=0$ such that
\begin{equation}
  \sup_{\pi\in\Pibar}\sup_{|k| \leq c}\left[\left|\dot{F}_{\pi}(\xV(\pi)+k)                            -
      \dot{F}_{\pi}(\xV(\pi))\right|     -    \omega(|k|)\right]     \le
  0. \label{eq:contderiv} 
\end{equation}

\begin{remark}
  A sufficient, but not necessary, condition  for such an $\omega$ to exist is
  that $F_{\pi}$ is twice continuously  differentiable with the absolute range
  of the second derivative $\ddot{F}_{\pi}$  on $[\xV(\pi)\pm c]$ bounded away
  from infinity  uniformly in  $\pi\in\Pibar$.  Indeed, Taylor's  theorem then
  shows that one can take
  \begin{equation*}
    \omega(m)\equiv    m\times   \sup_{\pi\in\Pibar}\sup_{\tilde{m}\in
      [\xV(\pi)\pm c]}|\ddot{F}_{\pi}(\tilde{m})|. 
\end{equation*}
\end{remark}

The next lemma gives conditions  under which (\ref{eq:valunifcons}) holds. Its
\hyperlink{proof:lemmedianreguc}{proof}          is          given          in
Appendix~\ref{app:proof:median}.

\begin{lemma}\label{lem:medianreguc}
  If    (\ref{eq:derivbd})   and    \eqref{eq:contderiv}    are   met,    then
  (\ref{eq:valunifcons}) holds.
\end{lemma}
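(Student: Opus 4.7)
The plan is to reduce uniform continuity of $\xV(\cdot)$ on $\Pibar$ to a uniform-in-$m$ Lipschitz control of the CDFs $\pi \mapsto F_\pi(m)$, and then invert near the median via the derivative lower bound in~\eqref{eq:derivbd}.

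First I would observe that since $L^2(P)$ limits of $\{-1,1\}$-valued functions are themselves $\{-1,1\}$-valued almost surely, every $\pi \in \Pibar$ maps into $\{-1,1\}$. For any $\pi_1, \pi_2 \in \Pibar$ the integrand $P(Y \le m \mid A=\pi_1(X),X) - P(Y \le m \mid A=\pi_2(X),X)$ vanishes on $\{\pi_1(X) = \pi_2(X)\}$ and is bounded by $1$ in absolute value elsewhere. Combined with the identity $(\pi_1(X)-\pi_2(X))^2 = 4\Ind\{\pi_1(X) \ne \pi_2(X)\}$, this yields the key uniform bound
\[
  \sup_{m \in \xR} |F_{\pi_1}(m) - F_{\pi_2}(m)| \le P(\pi_1(X) \ne \pi_2(X)) = \frac{1}{4} \norm{\pi_1 - \pi_2}^2.
\]

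Next, \eqref{eq:derivbd} implies that each $F_\pi$ is continuous and strictly increasing on $[\xV(\pi) \pm c]$ with derivative at least $\alpha \equiv \inf_{\pi \in \Pibar} \inf_{m \in [\xV(\pi) \pm c]} \dot F_\pi(m) > 0$; hence $F_\pi(\xV(\pi)) = 1/2$. Given $\eta > 0$, set $\epsilon \equiv \min(\eta,c)$, take $\pi_1, \pi_2 \in \Pibar$, put $v_i \equiv \xV(\pi_i)$, and assume without loss of generality that $v_2 \ge v_1$. I would argue in two steps. If $v_2 - v_1 > \epsilon$, evaluating at $\tilde m \equiv v_1 + \epsilon \in [v_1, v_1 + c]$ gives $F_{\pi_1}(\tilde m) \ge 1/2 + \alpha \epsilon$ by the mean value theorem, while $F_{\pi_2}(\tilde m) \le 1/2$ by monotonicity of the CDF and $\tilde m < v_2$. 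This forces $\norm{\pi_1 - \pi_2}^2/4 \ge \alpha \epsilon$, so contrapositively $\norm{\pi_1 - \pi_2} < 2\sqrt{\alpha \epsilon}$ implies $v_2 - v_1 \le \epsilon \le c$. In that case $[v_1, v_2] \subset [\xV(\pi_2) \pm c]$ and a second application of the mean value theorem to $F_{\pi_2}$ gives
\[
  \alpha |v_2 - v_1| \le F_{\pi_2}(v_2) - F_{\pi_2}(v_1) = F_{\pi_1}(v_1) - F_{\pi_2}(v_1) \le \frac{1}{4} \norm{\pi_1 - \pi_2}^2,
\]
whence $|v_1 - v_2| \le \norm{\pi_1 - \pi_2}^2 / (4\alpha)$. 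Chaining the two estimates produces a modulus of continuity for $\xV$ depending only on $\alpha$ and $c$, establishing~\eqref{eq:valunifcons}.

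The main delicate point is the two-phase structure: before the derivative lower bound can be used to obtain a quantitative bound on $|v_1 - v_2|$, one first has to force $|v_1 - v_2|$ into the regular neighborhood $[-c,c]$, which is the role of the contradiction argument at $\tilde m = v_1 + \epsilon$. I do not expect to invoke~\eqref{eq:contderiv} for this lemma; the positive uniform lower bound from~\eqref{eq:derivbd} alone suffices, and the uniform modulus of $\dot F_\pi$ supplied by~\eqref{eq:contderiv} is presumably reserved for later verifications (such as of~\eqref{eq:IFuc}) in the median-reward example.
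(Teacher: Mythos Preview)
Your proof is correct and follows essentially the same two-phase structure as the paper's: bound $|F_{\pi_1}(m)-F_{\pi_2}(m)|$ uniformly in $m$, then use the derivative lower bound from \eqref{eq:derivbd} first to force $|\xV(\pi_1)-\xV(\pi_2)|\le c$ by contraposition and second to obtain a quantitative estimate. The only difference is that the paper bounds the CDF gap via an inverse-probability-weighted representation (using strong positivity) to obtain a bound linear in $\norm{\pi_1-\pi_2}$, whereas you bound it directly by $P(\pi_1(X)\neq\pi_2(X))=\tfrac14\norm{\pi_1-\pi_2}^2$; your observation that \eqref{eq:contderiv} is not needed here is also borne out by the paper's proof, which never invokes it.
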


For every  $\pi \in \xP$,  $\xV(\pi)$ defined in \eqref{eq:def:median}  can be
viewed as the evaluation at $P$ of the functional
\begin{equation*}
  P'    \mapsto     \inf\big\{m\in\xR    :     1/2\le    \E_{P'}\left[P'(Y\le
      m|A=\pi(X),X)\right]\big\}
\end{equation*}
from  the  nonparametric  model  of distributions  $P'$  satisfying  the  same
constraints  as   $P$  to  the   real  line.   This  functional   is  pathwise
differentiable at $P$ relative to the  maximal tangent space with an efficient
influence function $f_{\pi}$ given by
\begin{multline}
  \IF_{\pi}(o)\equiv
  \frac{\Ind\{a=\pi(x)\}}{P(A=a|X=x)\dot{F}_{\pi}(\xV(\pi))}\left[\Ind\{y\le
    \xV(\pi)\}-P\{Y\le \xV(\pi)|A=a,X=x\}\right] \\
  +             \frac{P\{Y\le            \xV(\pi)|A=\pi(x),X=x\}             -
    1/2}{\dot{F}_{\pi}(\xV(\pi))}.
\end{multline}
Observe that above $\dot{F}_{\pi}  (\xV(\pi))$ only enters $\IF_{\pi}$ as
  a  multiplicative  constant,  and  therefore  an  estimating  equation-based
  estimator or TMLE for $\xV(\pi)$ can be asymptotically linear for $\xV(\pi)$
  without  estimating $\dot{F}_{\pi}$.   We, in  particular, suppose  that we
have  an   estimator  satisfying  (\ref{eq:estseq})  with   $r_n=n^{1/2}$  and
$\widetilde{\mathbb{G}}_n\equiv n^{1/2}(P_n-P)  \in \ell^{\infty}  (\xF)$ with
$\xF \equiv \{\IF_{\pi}  : \pi \in \Pi\}$, see  Remark~\ref{rem:median} at the
end of the present section.

With this choice of $\widetilde{\mathbb{G}}_n$, \eqref{eq:unifconv} is met and
$\Pi$   Donsker   yields   \eqref{eq:Pitb},   as  seen   in   the   proof   of
Theorem~\ref{thm:regDonsker} (the  same argument applies because  the range of
each $\pi\in\xP$ is  bounded in $[-1,1]$). Theorem~\ref{thm:generalRegDonsker}
yields the following corollary:

\begin{corollary}
  In    the    context    of   Section~\ref{subsec:median},    suppose    that
  (\ref{eq:derivbd}) and  \eqref{eq:contderiv} are  met. Let  $\pihat$ satisfy
  \eqref{eq:generalERM} with $r_{n}\equiv n^{1/2}$ and
  \begin{equation*}
    \sup_{\pi\in\Pi} \left|n^{1/2}\left(\widehat{\xV}(\pi)-\xV(\pi)\right)
      - \widetilde{\xG}_n \IF_{\pi}\right|=o_P(1). 
  \end{equation*}
  If     $\Pi$    is     Donsker    and     \eqref{eq:IFuc}    holds,     then
  $\Reg(\pihat) = o_{P} (n^{-1/2})$.
\end{corollary}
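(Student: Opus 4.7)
The plan is to verify hypotheses (\ref{eq:Pitb})--(\ref{eq:generalERM}) of Theorem~\ref{thm:generalRegDonsker} with $r_{n}\equiv n^{1/2}$ and $\widetilde{\xG}_{n}\equiv n^{1/2}(P_{n}-P)\in\ell^{\infty}(\xF)$; the theorem then yields the conclusion $\Reg(\pihat)=o_{P}(n^{-1/2})$ immediately. Most of these hypotheses are at hand: (\ref{eq:IFuc}) and the displayed approximation identity (which is exactly (\ref{eq:estseq})) are postulated in the corollary; (\ref{eq:generalERM}) is the defining property of $\pihat$; and (\ref{eq:valunifcons}) is the conclusion of Lemma~\ref{lem:medianreguc} under (\ref{eq:derivbd}) and (\ref{eq:contderiv}). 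Condition (\ref{eq:Pitb}) follows because every uniformly bounded $P$-Donsker class is totally bounded in $\norm{\cdot}$, and $\Pi$ is uniformly bounded since each $\pi\in\Pi$ takes values in $\{-1,1\}$.

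The one substantive step is (\ref{eq:unifconv}): $\widetilde{\xG}_{n}\leadsto\widetilde{\xG}_{P}$ in $\ell^{\infty}(\xF)$ with a Gaussian limit whose sample paths are almost surely $\norm{\cdot}$-uniformly continuous. For the empirical-process choice $\widetilde{\xG}_{n}=n^{1/2}(P_{n}-P)$, this reduces to showing that $\xF\equiv\{\IF_{\pi}:\pi\in\Pi\}$ is $P$-Donsker; the Remark following Theorem~\ref{thm:generalRegDonsker} then supplies the sample-path continuity, noting that $\xF$ is uniformly bounded thanks to strong positivity, the boundedness of $Y$, and the lower bound on $\dot{F}_{\pi}$ from (\ref{eq:derivbd}).

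The main obstacle is therefore to establish that $\xF$ is $P$-Donsker. Because (\ref{eq:IFuc}) delivers only uniform continuity, not Lipschitz continuity, of $\pi\mapsto\IF_{\pi}$, the standard preservation theorem for Donsker classes under Lipschitz maps \cite[Theorem~2.10.6]{vanderVaartWellner1996} does not apply directly. My plan is instead to exploit the explicit form of $\IF_{\pi}$ displayed in Section~\ref{subsec:median} and decompose it into a uniformly bounded combination of simpler classes: (i) $\{\Ind\{a=\pi(x)\}:\pi\in\Pi\}=\{(1+a\pi(x))/2:\pi\in\Pi\}$ is an affine Lipschitz image of the Donsker class $\Pi$ and so is itself Donsker; (ii) the scalars $\pi\mapsto\xV(\pi)$ and $\pi\mapsto\dot{F}_{\pi}(\xV(\pi))$ range over bounded intervals (the latter bounded away from zero) by (\ref{eq:valunifcons}) and (\ref{eq:derivbd}), and so contribute only a relatively compact one-dimensional indexing; and (iii) the $\xV(\pi)$-indexed nuisance terms $\Ind\{y\le\xV(\pi)\}$ and $P\{Y\le\xV(\pi)|A=a,X\}$ lie respectively in the classical half-line Donsker class and in a smooth-in-scalar class controlled by (\ref{eq:derivbd}). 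Closure of Donsker classes under bounded sums, products, and subclasses then yields the $P$-Donsker property of $\xF$.

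With (\ref{eq:unifconv}) in place, all six hypotheses of Theorem~\ref{thm:generalRegDonsker} hold, and we conclude $\Reg(\pihat)=o_{P}(n^{-1/2})$.
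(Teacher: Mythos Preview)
Your overall approach matches the paper's exactly: verify the hypotheses (\ref{eq:Pitb})--(\ref{eq:generalERM}) of Theorem~\ref{thm:generalRegDonsker} and apply it. The paper handles this corollary informally in the paragraph preceding it, asserting that ``with this choice of $\widetilde{\mathbb{G}}_n$, \eqref{eq:unifconv} is met'' and that $\Pi$ Donsker yields \eqref{eq:Pitb}; Lemma~\ref{lem:medianreguc} supplies \eqref{eq:valunifcons}; the remaining conditions are assumed. You go further than the paper by actually attempting to justify \eqref{eq:unifconv}, i.e., that $\xF$ is $P$-Donsker, via an adaptation of Lemma~\ref{lem:Pi:Donsker:F:Donsker}. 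The paper itself only remarks afterward that such an adaptation is possible ``under regularity conditions'' and that ``a slight modification to Lemma~\ref{lem:Pi:Donsker:F:Donsker} is needed.''

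Your decomposition strategy is the right one, but there is a slip in part (iii). Condition (\ref{eq:derivbd}) bounds the derivative of the \emph{averaged} CDF $F_{\pi}(m)=\E[P(Y\le m|A=\pi(X),X)]$, not the conditional density of $Y$ given $(A,X)$. To argue that the one-parameter class $\{(a,x)\mapsto P(Y\le v\mid A=a,X=x):v\in I\}$ is Donsker via smoothness in $v$, you need $v\mapsto P(Y\le v\mid A=a,X=x)$ to be Lipschitz uniformly in $(a,x)$, which is not implied by (\ref{eq:derivbd}). (A similar remark applies to the second summand $P\{Y\le\xV(\pi)\mid A=\pi(x),X=x\}$, which depends on $\pi$ both through the scalar $\xV(\pi)$ and through $\pi(x)$; after writing it as $\tfrac{1+\pi(x)}{2}P\{Y\le\xV(\pi)\mid A=1,X=x\}+\tfrac{1-\pi(x)}{2}P\{Y\le\xV(\pi)\mid A=-1,X=x\}$ the same conditional-CDF regularity issue reappears.) The paper acknowledges precisely this gap in the paragraph following the corollary: ``It would suffice to restrict the (uniform in $\pi$) behavior for all $P\{Y\le v|A=\pi(x),X=x\}$ across all real $v$.'' So your plan is sound, but you should either add this secondary regularity assumption explicitly or, like the paper, simply record that \eqref{eq:unifconv} follows once $\xF$ is shown Donsker by an adaptation of Lemma~\ref{lem:Pi:Donsker:F:Donsker} under mild extra conditions on the conditional CDF.
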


Since  $\Pi$  is Donsker,  \eqref{eq:IFuc}  can  be derived  under  regularity
conditions  by using  essentially  the  same techniques  as  in  the proof  of
Theorem~\ref{thm:regDonsker}.        A       slight      modification       to
Lemma~\ref{lem:Pi:Donsker:F:Donsker} is needed.  The main regularity condition
consists      in     assuming      that      the     real-valued      function
$\pi\mapsto \dot{F}_\pi(\xV(\pi))$ over  $\Pibar$ equipped with $\norm{\cdot}$
is    uniformly     continuous,    see     Corollary~\ref{cor:derivcont}    in
Appendix~\ref{app:generalERM}.   It  is  tedious,  though  not  difficult,  to
complement this main regularity condition with secondary conditions.  It would
suffice   to   restrict   the   (uniform    in   $\pi$)   behavior   for   all
$P\{Y\le v|A=\pi(x),X=x\}$ across all real $v$.

\begin{remark}
  \label{rem:median}
  One can, for  example, establish conditions under  which the estimating
    equation-based estimator defined as a solution in $v$ to
    \begin{equation*}
      \frac{\Ind\{a=\pi(x)\}\left[\Ind\{Y\le
          v\}-\widehat{P}\{Y\le v|A=\pi(X),X\}\right]}{\widehat{P}(A=a|X=x)} +
      \widehat{P}\{Y\le v|A=\pi(X),X\} = 1/2,
    \end{equation*}
    satisfies (\ref{eq:estseq}).  In the above display, $\widehat{P}$ denotes
  an estimate of (certain conditional  probabilities under) $P$. One could use
  cross-validated estimating  equations to  avoid the  need for  any empirical
  process conditions on $\widehat{P}$.
\end{remark}

\subsection{Example~3: Sequential Decisions  to Maximize the Mean  Reward of a
  Binary Action}
\label{subsec:seq:des}

\noindent{\bf     The    sampling     design.}      This    section     builds
upon~\cite{Chambazetal2017}.  Let $\{(X_{n},  Z_{n}(-1), Z_{n}(1)\}_{n\geq 1}$
be  a sequence  of random  variables drawn  independently from  a distribution
$\bP$  such  that,  if  $\{\tilde{A}_{n}\}_{n   \geq  1}$  is  a  sequence  of
independent actions drawn  from the conditional distribution of $A$ (an action with support $\{-1,1\}$) given $X$
under the same  $P$ as in Section~\ref{sec:formalization},  then the resulting
sequence $\{(X_{n}, \tilde{A}_{n}, Z_{n}  (\tilde{A}_{n}))\}_{n \geq 1}$ is an
i.i.d. sample  from $P$.  For  notational simplicity,  we assume that  all the
random variables $Z_{n} (-1)$ and $Z_{n} (1)$ take their values in $[0,1]$.

In    this    example,    however,    the    distribution    $\nu_{n}$    of
$(O_{1}, \ldots,  O_{n})$ does not  write as  a product because  the actions
$A_{1}, \ldots,  A_{n}$ are  not i.i.d. On  the contrary,  once sufficiently
many  observations   have  been  accrued   to  carry  out   inference  then,
sequentially, each new  randomized action $A_{n}$ is  drawn conditionally on
$X_{n}$   and   an  estimate   derived   from   the  previous   observations
$O_{1}  \equiv  (X_{1},  A_{1},   Y_{1}),  \ldots,  O_{n-1}\equiv  (X_{n-1},
A_{n-1},   Y_{n-1}    )$   yielded   by   the    previous   actions,   where
$Y_{i} \equiv  Z_{i} (A_{i})$ for $i=1,  \ldots, n-1$.  Let us  describe how
the data-adaptive design unfolds.

Let $\{t_{n}\}_{n \geq  1}$ and $\{\xi_{n}\}_{n \geq 1}$  be two nonincreasing
sequences   with    $t_{1}   \leq   1/2$,    $\lim_{n}   t_{n}   >    0$   and
$\lim_{n} \xi_{n} > 0$.   For each $n \geq 1$, let  $G_{n}$ be a nondecreasing
$\kappa_{n}$-Lipschitz functions approximating $u  \mapsto \Ind\{u\geq 0\}$ in
the sense that $G_{n}  (u) = t_{n}$ for $u \leq -\xi_{n}$,  $G_{n} (0) = 1/2$,
$G_{n}  (u)   =  1-t_{n}$  for  $u   \geq  \xi_{n}$.   We  also   suppose  that
$\limsup_{n}   \kappa_{n}    <   \infty$.     For   each   $n\geq    1$,   let
$\xQ_{n} \equiv \{Q_{\beta} : \beta \in B_{n}\}$ be a working model consisting
of   functions    mapping   $\{-1,1\}   \times   \xX$    to   $[0,1]$.    Each
$Q_{\beta} \in  \xQ_{n}$ can  be viewed  as a  candidate approximation  to the
function
$(A,X) \mapsto \Ind\{A=1\}\E_{\bP}[Z(1)|X] + \Ind\{A=-1\}\E_{\bP}[Z(-1)|X]$.

For some $n_{0} \geq 1$, $A_{1}, \ldots, A_{n_{0}}$ are independent draws from
the  conditional  distribution  of  $A$   given  $X$  under  $P$.   For  every
$i=1, \ldots,  n_{0}$, let  $g_{i}:\{-1,1\} \times \xX  \to [0,1]$  be defined
pointwise by $g_{i} (a,x) \equiv P(A = a|X=x)$.  Set $n > n_{0}$, suppose that
we have fully specified how $O_{1}, \ldots, O_{n-1}$ have been sampled through
the description of how $A_{1}, \ldots,  A_{n-1}$ have been randomly drawn from
Rademacher laws\footnote{The  Rademacher law with  parameter $p \in  [0,1]$ is
  the law  of $A  \in \{-1,1\}$  such that  $A=1$ with  probability~$p$.} with
parameters $g_{1}(1,X_{1}),  \ldots, g_{n-1}(1,X_{n-1})$,  respectively.  Now,
let us  describe how $O_{n}$  is sampled by  specifying how action  $A_{n}$ is
randomized,  therefore  completing the  description  of  the sampling  design.
Based on $O_{1}, \ldots, O_{n-1}$, we define
\begin{equation*}
  \beta_{n}  \equiv   \mathop{\arg\min}_{\beta  \in   B_{n}}  \sum_{i=1}^{n-1}
  \frac{\xL(Q_{\beta})(O_{i}) }{g_{i} (A_{i} | X_{i})} 
\end{equation*}
where    the    least-square   loss    function    $\xL$    is   given    by
$\xL(Q_{\beta})  (O)\equiv  (Y  -  Q_{\beta}(A,X))^{2}$.   This  yields  the
mapping $g_{n}  : \{-1,1\}  \times \xX \to  [t_{n}, 1-t_{n}]  \subset [0,1]$
given by
\begin{equation*}
  1 - g_{n}(-1,x) \equiv g_{n}(1,x)  \equiv G_{n} \big(Q_{\beta_{n}} (1,x) -
  Q_{\beta_{n}} (-1,x)\big).  
\end{equation*}
Once   $X_{n}$    is   observed,   we   sample    $A_{n}$   conditionally   on
$O_{1},  \ldots,  O_{n-1}$ and  $X_{n}$  from  the Ra\-de\-ma\-cher  law  with
parameter  $g_{n}(1,X_{n})$,  we  then   carry  out  action  $A_{n}$,  observe
$Y_{n} \equiv Z_{n} (A_{n})$, and form $O_{n} \equiv (X_{n}, A_{n}, Y_{n})$.

\begin{remark}
The  sampling design  looks  for a  trade-off  between exploration  and
  exploitation.   Here,  exploitation consists  in  using  the current  best
  estimates  $Q_{\beta_{n}}(1,  X_{n})$  and $Q_{\beta_{n}}(-1,  X_{n})$  of
  $\E_{\bP}[Z(1)|X=X_{n}]$ and $\E_{\bP}[Z(-1)|X=X_{n}]$ to favor the action
  which seems  to have the  larger mean reward  in context $X_{n}$.   If the
  absolute value of the difference between  the two estimates is larger than
  $\xi_{n}$,  then  the  supposedly  superior action  is  carried  out  with
  probability  $(1-t_{n})$. Exploration  consists in  giving the  supposedly
  inferior action a probability $t_{n}$  to be undertaken nonetheless.  This
  allows,  for   instance,  to  correct   a  possibly  poor   estimation  of
  $\E_{\bP}[Z(1)|X]$ and $\E_{\bP}[Z(-1)|X]$ in some strata of $\xX$.
\end{remark}

\begin{remark}
  For  any  $g  :  \{-1,1\}  \times  \xX \to  [0,1]$,  let  $\bP^{g}$  be  the
  distribution of  $O$ defined by  {\em (i)}~sampling $(X, Z(-1),  Z(1))$ from
  $\bP$,  {\em (ii)}  sampling  $A$  from the  Rademacher  law with  parameter
  $g(1,X)$,  {\em  (iii)}  setting  $O   \equiv  (X,  A,  Z(A))$.   Note  that
  $W \equiv g(A,X)$ is a deterministic function of $g$ and $O$.  Therefore, we
  can augment  $O$ with  $W$, {\em  i.e.}, substitute  $(O,W)$ for  $O$, while
  still  denoting $(O,W)  \sim  \bP^{g}$.  A  balanced  design corresponds  to
  $\bP^{b}$, where  $b : \{-1,1\} \times  \xX \to [0,1]$ only  takes the value
  1/2.
\end{remark}

\noindent{\bf  Optimal policy  estimation.}  We  now define  set of  candidate
policies $\Pi$.   In particular $\Pi$  is the set  of policies such  that, for
each $\pi \in \Pi$, there exists $\beta \in \cup_{n\geq 1} B_{n}$ for which
\begin{eqnarray*}
  \pi (X) 
  &\equiv& \mathop{\arg\max}_{a \in \{-1,1\}} Q_{\beta} (a, X)\\
  &=& \Ind\{Q_{\beta} (1,X) \geq  Q_{\beta} (-1,X)\} - \Ind\{Q_{\beta} (1,X)
      < Q_{\beta} (-1,X)\},
\end{eqnarray*}
where the  choice to  let $A=1$  when $Q_{\beta}(1,X)=Q_{\beta}(-1,X)$  in the
second equality  is a  convention.  The  value $\xV(\pi)$  of $\pi\in  \Pi$ is
still  given  by \eqref{eq:def:value}  and  its  (within class  $\Pi$)  regret
$\Reg(\pi)$ by \eqref{eq:def:regret}.

Recall   the  definition   of   $\xF$   characterized  by   \eqref{eq:IF:pi}
and~\eqref{eq:IF}.   Let $\widetilde{\xG}_{n}  \in  \ell^{\infty} (\xF)$  be
such that, for each $\IF \in \xF$,
\begin{equation*}
  n^{1/2}  \widetilde{\xG}_{n}  \IF   \equiv  \sum_{i=1}^{n}  \left(\IF(O_{i},
    W_{i})   -   \E_{\bP^{g_{i}}}   [\IF(O_{i},  W_{i})   |   O_{1},   \ldots,
    O_{i-1}]\right), 
\end{equation*}
where we  use the fact  that $g_{i}$ is  deterministic when one  conditions on
$O_{1}, \ldots, O_{i-1}$  ({\em i.e.}, on nothing when  $i=1$).  By exploiting
the  martingale structure  of each  $n^{1/2} \widetilde{\xG}_{n}  \IF$, it  is
possible  to  show  that $\widetilde{\xG}_{n}$  satisfies  \eqref{eq:unifconv}
under mild conditions (expressed in terms  of the uniform entropy integral) on
$\{\xQ_{n}\}_{n        \geq       1}$~\cite[][Lemma~B.3]{Chambazetal2017supp}.
Theorem~\ref{thm:generalRegDonsker}  yields  a  corollary for  our  sequential
design  setting. We  discuss the  conditions  of the  corollary following  its
statement.
\begin{corollary}
  \label{cor:seq:des}
  In   the    context   of   Section~\ref{subsec:seq:des},    suppose   that
  $\{\xQ_{n}\}_{n \geq 1}$  is chosen in such a way  that $\Pi$ is separable
  and  $J_{1} (\eta_{n},  \Pi)  = o(1)$  for any  $\eta_{n}  = o(1)$,  where
  $J_{1} (\eta, \Pi)$ is the uniform entropy integral evaluated at $\eta$ of
  $\Pi$ wrt the envelope function constantly equal to one. Moreover, suppose
  that $\pihat$  satisfies \eqref{eq:generalERM} with  $r_{n}\equiv n^{1/2}$
  and that
  \begin{equation*}
    \sup_{\pi\in\Pi} \left|n^{1/2}\left(\widehat{\xV}(\pi)-\xV(\pi)\right)
      - \widetilde{\xG}_n \IF_{\pi}\right|=o_P(1). 
  \end{equation*}
  Then $\Reg(\pihat) = o_{P} (n^{-1/2})$.
\end{corollary}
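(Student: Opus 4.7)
The strategy is simply to verify, in the sequential setting of Section~\ref{subsec:seq:des}, the hypotheses \eqref{eq:Pitb}, \eqref{eq:valunifcons}, \eqref{eq:IFuc} and \eqref{eq:unifconv} needed to invoke Theorem~\ref{thm:generalRegDonsker} with $r_{n} \equiv n^{1/2}$. The ERM property \eqref{eq:generalERM} and the uniform linearization \eqref{eq:estseq} are assumed outright in the statement of the corollary, so nothing has to be done for them.

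First I would dispatch the complexity requirement \eqref{eq:Pitb}. The assumption $J_{1}(\eta_{n}, \Pi) = o(1)$ for any $\eta_{n} = o(1)$, together with the envelope being constantly equal to one, implies in particular that $\sup_{Q} N(\varepsilon, \Pi, L^{2}(Q)) < \infty$ for each $\varepsilon > 0$, where the supremum is over all finitely supported probability measures $Q$. Applied to $Q = P$ this yields total boundedness of $\Pi$ in $L^{2}(P)$, so that \eqref{eq:Pitb} and the existence of the $L^{2}(P)$-closure $\Pibar$ are both at hand.

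Next I would handle \eqref{eq:valunifcons} and \eqref{eq:IFuc} in parallel, since they are ``regularity'' statements of the same flavor as the analogous verifications in Example~1. Because $Y$ is bounded, $P(A = a \mid X) \geq \delta$ almost surely, and both $\pi, \pi' \in \Pibar$ take values in the compact set $\{-1,1\}$, an elementary calculation based on the explicit form of $\IF_{\pi}$ in \eqref{eq:IF:pi} gives a universal constant $C$ with $\norm{\IF_{\pi} - \IF_{\pi'}} \leq C \norm{\pi - \pi'}^{1/2}$ and $|\xV(\pi) - \xV(\pi')| \leq C \norm{\pi - \pi'}$ for all $\pi, \pi' \in \Pibar$; that is exactly the Lipschitz/H\"older control argument used in Lemma~\ref{lem:discr:action}, applied here with $\xA = \{-1,1\}$ and reference measure $P$.

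The main obstacle, and the part that really distinguishes this corollary from the i.i.d. examples, is verifying \eqref{eq:unifconv} for the martingale-type process $\widetilde{\xG}_{n}$ defined through the increments $\IF(O_{i}, W_{i}) - \E_{\bP^{g_{i}}}[\IF(O_{i}, W_{i}) \mid O_{1}, \ldots, O_{i-1}]$. The plan is to invoke Lemma~B.3 of \cite{Chambazetal2017supp} essentially off the shelf: that lemma proves a functional central limit theorem for this martingale empirical process whenever the indexing class has sufficiently controlled uniform entropy, and the separability of $\Pi$ combined with the assumption $J_{1}(\eta_{n}, \Pi) = o(1)$ for $\eta_{n} = o(1)$ is exactly the ingredient it requires. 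To translate this into a statement about $\xF$ rather than $\Pi$, I would use the H\"older bound $\norm{\IF_{\pi} - \IF_{\pi'}} \leq C \norm{\pi - \pi'}^{1/2}$ from the previous step, which transfers the uniform entropy integral condition from $\Pi$ to $\xF$ up to a change of variables. The limit process $\widetilde{\xG}_{P}$ produced is Gaussian with the covariance structure inherited from the limiting conditional variances of the martingale increments, and its sample paths are almost surely uniformly continuous on $(\xF, \norm{\cdot})$. Once \eqref{eq:unifconv} is in hand, all hypotheses of Theorem~\ref{thm:generalRegDonsker} are met and the conclusion $\Reg(\pihat) = o_{P}(n^{-1/2})$ follows directly.
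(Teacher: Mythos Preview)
Your proposal is correct and follows essentially the same route as the paper: verify \eqref{eq:Pitb}--\eqref{eq:unifconv} (the first three via the elementary Lipschitz-type bounds of Lemma~\ref{lem:discr:action}, the last via Lemma~B.3 of \cite{Chambazetal2017supp} with the entropy transfer implicit in the proof of Lemma~\ref{lem:Pi:Donsker:F:Donsker}) and then invoke Theorem~\ref{thm:generalRegDonsker}. One minor remark: the paper in fact obtains the full Lipschitz bound $\norm{\IF_{\pi} - \IF_{\pi'}} \lesssim \norm{\pi - \pi'}$ (see \eqref{eq:fpione:fpitwo}), not merely the H\"older-$1/2$ bound you state, though either suffices here.
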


Lemma~\ref{lem:discr:action}    shows    that    \eqref{eq:valunifcons}    and
\eqref{eq:IFuc} are  met.  Conditions \eqref{eq:Pitb}  and \eqref{eq:unifconv}
follow          from          Lemma~\ref{lem:Pi:Donsker:F:Donsker}          in
Appendix~\ref{sec:proofregDonsker}  and  from  the maximal  inequality  stated
in~\cite[][Lemma~B.3]{Chambazetal2017supp},   whose   assumptions  drive   the
conditions on $\{\xQ_{n}\}_{n \geq  1}$ stated in Corollary~\ref{cor:seq:des}.
A    concrete    example    of    such     a    sequence    can    be    found
in~\cite[][Section~4.4]{Chambazetal2017} (see  also Section~4.1 therein  for a
brief  reminder   about  the  uniform  integral   entropy).   Additional  mild
assumptions  guaranteeing  that  an  estimator  $\xVhat$  can  be  defined  as
requested      by     Corollary~\ref{cor:seq:des}      can     be      adapted
from~\cite{Chambazetal2017}.    Finally,  we   note  for   the  ERM   property
\eqref{eq:generalERM}  that  each $\pi\in\Pi$  is  known  once one  knows  the
$\beta\in\cup_{n\ge 1}  B_n$ that indexes  this $\pi$.  Hence, it  suffices to
study  the estimated  value  $\widehat{\xV}(\pi)$ for  each  such $\pi$.   The
appropriate algorithm  for carrying out  this optimization will depend  on the
particular choice of classes $B_n$ and the working model $Q_{\beta}$.

\section{Discussion}

We have  presented fast rates  of regret  decay in optimal  policy estimation,
{\em i.e.},  rates of  decay that  are faster than  the rate  of decay  of the
standard error of an  efficient estimator of the value of  any given policy in
the  candidate   class.   Our  method   of  proof  for  our   primary  result,
Theorem~\ref{thm:regDonsker}, leverages  the fact  that the  empirical process
over a Donsker class converges in distribution to a Gaussian process, and that
the  sample paths  of  this  limiting process  are  (almost surely)  uniformly
continuous. The downside of our analysis, namely passing to the limit and then
studying the behavior of  the limiting process, is that it  does not appear to
allow one to obtain a faster  than $o_P(n^{-1/2})$ rate of convergence for the
regret.

It would  be of  interest to  replace our limiting  argument by  finite sample
results that would  allow one to exploit the finite  sample equicontinuity of
the empirical process  to demonstrate faster rates. Nonetheless,  we note that
the problem-dependent margin  condition will often have a major  impact on the
extent to  which the  rate can  be improved. Furthermore,  as we  discussed in
Section~\ref{rem:tightening},  the existence  of the  remainder term  $\Rem_n$
necessitates a careful  consideration of whether or not  the empirical process
term represents the dominant error term, or whether the second-order remainder
that appears  due to the non-linearity  of the value parameter  represents the
dominant error  term. In restricted,  semiparametric models, we  suspect that,
depending on the underlying margin, an  inefficient estimator of the value may
yield a faster rate of regret  decay than an efficient estimator. Despite this
surprising  phenomenon,  we  continue  to  advocate  the  use  of  first-order
efficient value estimators.

{\singlespacing
  \section*{Acknowledgements}
  Alex Luedtke gratefully acknowledges the support of the New Development Fund
  of the Fred Hutchinson Cancer  Research Center. Antoine Chambaz acknowledges
  the support  of the  French Agence  Nationale de  la Recherche  (ANR), under
  grant  ANR-13-BS01-0005 (project  SPADRO) and  that this  research has  been
  conducted as part of the project Labex MME-DII (ANR11-LBX-0023-01).  }

\newpage
\appendix
\setcounter{equation}{0}
\renewcommand{\theequation}{\thesection.\arabic{equation}}
\setcounter{theorem}{0}
\renewcommand{\thetheorem}{\thesection.\arabic{theorem}}
\renewcommand{\thecorollary}{\thesection.\arabic{theorem}}
\renewcommand{\thelemma}{\thesection.\arabic{theorem}}

\section*{Appendix}

\section{Proof of Main ERM Result}\label{sec:proofregDonsker}

We begin  this section  with five  lemmas and a  corollary, whose  proofs only
require    results    from    functional     analysis.     We    then    prove
Lemma~\ref{lem:Pi:Donsker:F:Donsker}     and     Theorem~\ref{thm:regDonsker},
using results from empirical process theory.

In   Lemmas~\ref{lem:closurecompact}~through~\ref{lem:cont}  to   follow,  all
topological  results make  use  of  the strong  topology  on  $L^2(P)$. A  set
$S\subset L^2(P)$ is called totally  bounded if, for every $\epsilon>0$, there
exists  a  finite  collection  of radius-$\epsilon$  open  balls  that  covers
$S$. When  we refer to  convergence in these  lemmas, we refer  to convergence
with respect  to $\norm{\cdot}$. A  set $S$  is sequentially compact  if every
sequence  of   elements  of   $S$  has  a   convergent  subsequence.    A  set
$S\subset   L^2(P)$  is   compact   if   every  open   cover   has  a   finite
subcover. Sequential compactness and compactness are equivalent for subsets of
metric    spaces.      Lemma~\ref{lem:gsemicont}    and    the     proof    of
Theorem~\ref{thm:regDonsker}  use several  additional  definitions, which  are
given immediately before Lemma~\ref{lem:gsemicont}.

\begin{lemma}\label{lem:closurecompact}
  If $\Pi$ is totally bounded, then $\Pibar$ is compact.
\end{lemma}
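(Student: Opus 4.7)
The plan is to combine two standard facts from metric space topology with the observation that $L^2(P)$ is a complete metric space. Since compactness in a metric space is equivalent to being complete and totally bounded, I would aim to verify both properties for $\Pibar$.

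First I would argue that $\Pibar$ is complete. Indeed, $L^2(P)$ is complete (it is a Banach space under $\norm{\cdot}$), and $\Pibar$ is closed in $L^2(P)$ by construction. A closed subset of a complete metric space is itself complete, so $\Pibar$ is complete.

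Next I would show that the closure of a totally bounded set remains totally bounded. Fix $\epsilon > 0$. Since $\Pi$ is totally bounded, there exist $\pi_1, \ldots, \pi_N \in \Pi$ such that $\Pi \subset \bigcup_{i=1}^{N} B_{\epsilon/2}(\pi_i)$, where $B_{r}(\cdot)$ denotes the open $L^2(P)$-ball of radius $r$. For any $\bar\pi\in\Pibar$, there is a sequence $\{\pi^{(k)}\}\subset\Pi$ converging to $\bar\pi$, and by the pigeonhole principle infinitely many of the $\pi^{(k)}$ lie in a single ball $B_{\epsilon/2}(\pi_i)$; passing to the limit yields $\norm{\bar\pi - \pi_i}\le \epsilon/2 < \epsilon$. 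Hence $\Pibar \subset \bigcup_{i=1}^{N} B_{\epsilon}(\pi_i)$, and since $\epsilon$ was arbitrary, $\Pibar$ is totally bounded.

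Finally, I would invoke the metric-space characterization of compactness: a subset of a metric space is compact if and only if it is complete and totally bounded (equivalently, sequentially compact). Combining the two previous paragraphs gives compactness of $\Pibar$. There is no serious obstacle in this argument; everything is standard functional analysis, and the only point worth stating carefully is the passage from a totally bounded cover of $\Pi$ to one of $\Pibar$ via the closure of open balls being contained in slightly larger open balls.
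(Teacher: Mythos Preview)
Your proof is correct and follows essentially the same approach as the paper: both rely on the completeness of $L^2(P)$ together with total boundedness to conclude compactness of the closure. The paper simply cites a standard reference (Browder's Corollary~6.65, which states that totally bounded subsets of complete metric spaces are relatively compact) rather than spelling out the complete-plus-totally-bounded characterization, but the underlying argument is the same.
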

\begin{proof}
  The  Hilbert  space  $L^2(P)$  is   a  complete  metric  space.   Therefore,
  \citep[][Corollary  6.65]{Browder2012}  implies  that  $\Pi$  is  relatively
  compact. In other words, $\Pibar$ is compact.
\end{proof}

For   brevity,   introduce   $Q(A,X)   \equiv   \E(Y|A,X)$.   Note   that
  $\gamma(X) = Q(1,X) - Q(-1,X)$.

\begin{lemma}\label{lem:VRcont}
  Both $\xV$ and $\Reg$ are continuous on $\Pibar$.
\end{lemma}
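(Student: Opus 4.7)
The plan is to show that $\xV$ is Lipschitz on $\Pibar$ with respect to $\norm{\cdot}$, from which continuity of $\Reg = \xV^\star - \xV$ follows immediately. The key observation is that, although $\xV$ is defined on $\Pi$ via the conditional expectation $\E[Y|A=\pi(X),X]$, which is only literally meaningful for $\{-1,1\}$-valued arguments, one can rewrite $\xV$ as an affine functional of $\pi$ that extends naturally to all of $L^2(P)$, and hence to $\Pibar$.

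Concretely, I would set $Q(a,x)\equiv \E[Y|A=a,X=x]$ and observe that for $\pi\in\Pi$, since $\pi(X)\in\{-1,1\}$,
\begin{equation*}
  Q(\pi(X),X) = \tfrac{1+\pi(X)}{2}\,Q(1,X) + \tfrac{1-\pi(X)}{2}\,Q(-1,X) = \tfrac{Q(1,X)+Q(-1,X)}{2} + \tfrac{\pi(X)}{2}\,\gamma(X),
\end{equation*}
where $\gamma(X)\equiv Q(1,X)-Q(-1,X)$. Taking expectations yields
\begin{equation*}
  \xV(\pi) = c + \tfrac{1}{2}\,\E\!\left[\pi(X)\,\gamma(X)\right], \qquad c \equiv \tfrac{1}{2}\,\E[Q(1,X)+Q(-1,X)].
\end{equation*}
The right-hand side is a bounded affine functional on $L^2(P)$ that agrees with the original definition on $\Pi$, so it is the unique continuous extension of $\xV$ to $\Pibar$.

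Then I would apply Cauchy--Schwarz. Since $Y$ is uniformly bounded under $P$, we have $\gamma\in L^\infty(P)\subset L^2(P)$ with $\norm{\gamma}\le 2\,\norm{Y}_{L^\infty(P)}$. Hence for any $\pi_{1},\pi_{2}\in\Pibar$,
\begin{equation*}
  |\xV(\pi_{1})-\xV(\pi_{2})| = \tfrac{1}{2}\,\bigl|\E[(\pi_{1}(X)-\pi_{2}(X))\gamma(X)]\bigr| \le \tfrac{1}{2}\,\norm{\gamma}\cdot\norm{\pi_{1}-\pi_{2}},
\end{equation*}
which establishes Lipschitz (and therefore uniform) continuity of $\xV$ on $\Pibar$. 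Continuity of $\Reg(\pi)=\xV^\star-\xV(\pi)$ is immediate.

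There is no real obstacle here; the only subtlety worth flagging is that elements of $\Pibar$ may take values outside $\{-1,1\}$ (indeed, anywhere in $[-1,1]$), so one must commit to the affine extension above as the meaning of $\xV$ on $\Pibar$. Once this is done, the linearity of $\pi\mapsto\xV(\pi)$ in $\pi$ together with boundedness of $\gamma$ gives continuity in one line of Cauchy--Schwarz.
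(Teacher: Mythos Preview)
Your proposal is correct and essentially identical to the paper's proof: both rewrite $Q(\pi(X),X)$ as the affine expression $\tfrac{1}{2}(1+\pi(X))Q(1,X)+\tfrac{1}{2}(1-\pi(X))Q(-1,X)$ and then apply Cauchy--Schwarz to obtain $2|\xV(\pi_1)-\xV(\pi_2)|\le\norm{\gamma}\,\norm{\pi_1-\pi_2}$. Your explicit remark that this affine form is what defines $\xV$ on $\Pibar$ (where elements need not be $\{-1,1\}$-valued) is a worthwhile clarification that the paper leaves implicit.
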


\begin{proof}
  The  key to  this proof  is the  following simple  remark: for  every policy
  $\pi:\xX \to \{-1,1\}$,
  \begin{equation}
    \label{eq:key:one}
    2Q(\pi(X), X) = (1+\pi(X)) Q(1,X) + (1-\pi(X)) Q(-1,X).
  \end{equation}
  Choose arbitrarily $\pi_{1}, \pi_{2} \in \Pibar$.  By \eqref{eq:key:one} and
  the Cauchy-Schwarz inequality, it holds that
  \begin{equation}
    \label{eq:nupione:nupitwo}
    2|\xV(\pi_{1})      -     \xV(\pi_{2})|      =     \left|\E\left[\gamma(X)
        (\pi_{1}-\pi_{2})(X)\right]\right|    \leq     \norm{\gamma}    \times
    \norm{\pi_{1} - \pi_{2}}.  
  \end{equation}
  This proves the continuity of $\xV$. That of $\Reg$ follows immediately.
\end{proof}

Recall the definition \eqref{eq:Pibar} of $\Pibar^{\star}$.
\begin{lemma}\label{lem:Pistarnonempty}
  If  $\Pi$  is  totally   bounded,  then  $\inf_{\pi\in  \Pibar}\Reg(\pi)=0$.
  Moreover,
  $\Pibar^\star =  \{\pi^{\star}\in \Pibar  : \Reg(\pi^{\star}) \leq  0\}$ and
  $\Pibar^\star\neq \emptyset$.
\end{lemma}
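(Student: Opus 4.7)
The plan is to combine Lemmas~\ref{lem:closurecompact} and~\ref{lem:VRcont} via a classical extreme-value-theorem argument. First I would invoke Lemma~\ref{lem:closurecompact} to conclude that $\Pibar$ is compact with respect to $\norm{\cdot}$. Then, since Lemma~\ref{lem:VRcont} tells us that $\xV$ is continuous on $\Pibar$, a continuous real-valued function on a nonempty compact metric space attains its supremum, so there exists some $\pi^\star \in \Pibar$ with $\xV(\pi^\star) = \sup_{\pi \in \Pibar} \xV(\pi)$.

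Second, I would show $\sup_{\pi \in \Pibar} \xV(\pi) = \xV^\star$. Because $\Pi \subset \Pibar$, the supremum on the left is at least $\xV^\star$. For the reverse inequality, I would use that $\Pi$ is dense in $\Pibar$ together with continuity of $\xV$: for any $\pi \in \Pibar$, there is a sequence $\{\pi_{k}\} \subset \Pi$ with $\pi_{k} \to \pi$ in $L^{2}(P)$, whence $\xV(\pi_{k}) \to \xV(\pi)$ and consequently $\xV(\pi) \leq \xV^\star$. Combining the two steps gives $\xV(\pi^\star) = \xV^\star$, i.e., $\Reg(\pi^\star) = 0$, so $\pi^\star \in \Pibar^\star$ and $\Pibar^\star$ is nonempty. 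This also yields $\inf_{\pi \in \Pibar} \Reg(\pi) = \xV^\star - \sup_{\pi \in \Pibar} \xV(\pi) = 0$.

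Finally, for the claimed set identity, the inclusion $\Pibar^\star \subset \{\pi^\star \in \Pibar : \Reg(\pi^\star) \leq 0\}$ holds by the definition~\eqref{eq:Pibar}. For the reverse inclusion, if $\pi^\star \in \Pibar$ satisfies $\Reg(\pi^\star) \leq 0$, then $\xV(\pi^\star) \geq \xV^\star$; combined with $\xV(\pi^\star) \leq \sup_{\pi \in \Pibar} \xV(\pi) = \xV^\star$ from the previous step, this forces $\xV(\pi^\star) = \xV^\star$, and hence $\pi^\star \in \Pibar^\star$. I do not anticipate any substantial obstacle: once Lemmas~\ref{lem:closurecompact} and~\ref{lem:VRcont} are in hand, the argument is essentially bookkeeping around the extreme value theorem and the density of $\Pi$ in $\Pibar$.
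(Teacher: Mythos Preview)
Your proposal is correct and follows essentially the same approach as the paper: both invoke Lemma~\ref{lem:closurecompact} for compactness of $\Pibar$, Lemma~\ref{lem:VRcont} for continuity, apply the extreme value theorem, and use density of $\Pi$ in $\Pibar$ together with continuity to rule out the optimal value over $\Pibar$ exceeding $\xV^\star$. The only cosmetic difference is that the paper works with $\Reg$ (minimizing) and phrases the density step as a contradiction, whereas you work with $\xV$ (maximizing) and argue directly; the content is identical.
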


\begin{proof}
  By    Lemma~\ref{lem:closurecompact},     $\Pibar$    is     compact.     By
  Lemma~\ref{lem:VRcont},  $\Reg$  is  continuous.  Thus,  $\Reg$  admits  and
  achieves   a  minimum   $\Reg(\pibar)  =   \inf_{\pi\in\Pibar}\Reg(\pi)$  on
  $\Pibar$.     Since    $\inf_{\pi\in\Pi}\Reg(\pi)=0$,     we    know    that
  $\Reg(\pibar)  \le  0$.  In  fact,   a  contradiction  argument  shows  that
  $\Reg(\pibar)         =         0$.          In         other         words,
  $\Pibar^\star =  \{\pi^{\star}\in \Pibar  : \Reg(\pi^{\star}) \leq  0\}$ and
  $\pibar \in \Pibar^{\star}$, hence $\Pibar^{\star} \neq \emptyset$.

  Indeed,  assume  that  $\Reg(\pibar)<0$.  Because  $\pibar\in\Pibar$,  there
  exists  a sequence  $\{\pi_m\}_{m\geq 1}$  of  elements of  $\Pi$ such  that
  $\norm{\pi_m-\pibar}\rightarrow   0$.    By   continuity  of   $\Reg$   (see
  Lemma~\ref{lem:VRcont}),  $\Reg(\pi_m)\rightarrow   \Reg(\pibar)<0$.   Thus,
  $\inf_{\pi\in\Pi}\Reg(\pi)<0$. Contradiction.
\end{proof}

\begin{lemma}\label{lem:cont}
  If $\Pi$ is totally bounded, then
  \begin{equation}
    \label{eq:ca}\tag{CA}
    \limsup_{r\downarrow    0}     \sup_{\pi\in\Pibar    :     \Reg(\pi)\le    r}
    \inf_{\pi^\star\in \Pibar^\star} \norm{\pi^\star - \pi}=0.  
  \end{equation}
\end{lemma}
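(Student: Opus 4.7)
My plan is to argue by contradiction, leaning on two ingredients already in place: the compactness of $\Pibar$ in $L^2(P)$ from Lemma~\ref{lem:closurecompact}, and the continuity of $\Reg$ on $\Pibar$ from Lemma~\ref{lem:VRcont}. Suppose the $\limsup$ in \eqref{eq:ca} is strictly positive, so there exist $\epsilon > 0$ and a sequence $r_m \downarrow 0$ with
\[
\sup_{\pi \in \Pibar : \Reg(\pi) \le r_m} \inf_{\pi^\star \in \Pibar^\star} \norm{\pi^\star - \pi} \ge \epsilon
\]
for every $m$. Unpacking the supremum, for each $m$ I would select $\pi_m \in \Pibar$ with $\Reg(\pi_m) \le r_m$ and $\inf_{\pi^\star \in \Pibar^\star} \norm{\pi^\star - \pi_m} \ge \epsilon/2$.

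Next, compactness of $\Pibar$ provides a subsequence $\pi_{m_k} \to \pi^\infty$ for some $\pi^\infty \in \Pibar$. Continuity of $\Reg$ then gives $\Reg(\pi^\infty) = \lim_k \Reg(\pi_{m_k})$, which combined with $\Reg(\pi_{m_k}) \le r_{m_k} \to 0$ yields $\Reg(\pi^\infty) \le 0$. By Lemma~\ref{lem:Pistarnonempty}, this is precisely the membership criterion $\pi^\infty \in \Pibar^\star$.

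Finally, using $\pi^\infty$ as a candidate in the infimum defining the distance to $\Pibar^\star$ gives $\inf_{\pi^\star \in \Pibar^\star} \norm{\pi^\star - \pi_{m_k}} \le \norm{\pi^\infty - \pi_{m_k}} \to 0$ as $k \to \infty$, which flatly contradicts the lower bound $\epsilon/2$ preserved along the subsequence.

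The argument is the standard compactness-plus-continuity template, and I do not expect any genuine obstacle once the earlier lemmas are invoked; the only mildly delicate bookkeeping is unpacking the $\limsup$--$\sup$--$\inf$ structure so that the chosen sequence $(\pi_m, r_m)$ lines up with the eventual extraction of a convergent subsequence, which is routine in metric spaces.
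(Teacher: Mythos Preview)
Your proposal is correct and follows essentially the same compactness-plus-continuity argument as the paper: extract a sequence in $\Pibar$ with vanishing regret but bounded away from $\Pibar^\star$, pass to a convergent subsequence, and use continuity of $\Reg$ to place the limit in $\Pibar^\star$, contradicting the lower bound. The only cosmetic difference is that the paper frames this as a contraposition (negation of \eqref{eq:ca} implies $\Pibar$ is not sequentially compact, hence $\Pi$ is not totally bounded), whereas you argue by direct contradiction assuming compactness up front; the logical content is identical.
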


\begin{proof}
  We argue by  contraposition. Suppose \ref{eq:ca} does not  hold.  Then there
  exists  a  sequence  $\{\pi_m\}_{m\geq  1}$  of  elements  of  $\Pibar$  and
  $\delta>0$ such that $\Reg(\pi_m)\rightarrow 0$ and, for all $m\geq 1$,
  \begin{equation}
    \label{eq:contradiction}
    \inf_{\pi^\star\in\Pibar^\star}\norm{\pi_m-\pi^\star}>\delta. 
  \end{equation}

  We now give a contradiction argument to show that $\{\pi_m\}_{m\geq 1}$ does
  not  have a  convergent  subsequence.  Suppose  there  exists a  subsequence
  $\{\pi_{m_k}\}_{k\geq 1}$  such that $\norm{\pi_{m_k}- \pi_{\infty}}  \to 0$
  for   some   $\pi_{\infty}\in   L^2(P)$.    Now,   note   that   {\em   (i)}
  $\pi_{\infty}\in    \Pibar$,   the    closure   of    $\Pi$,   {\em    (ii)}
  $\Reg(\pi_{m_k})\rightarrow        0$,         and        {\em        (iii)}
  $\Reg(\pi_{m_k})\rightarrow  \Reg(\pi_\infty)$   by  Lemma~\ref{lem:VRcont}.
  Consequently,  $\Reg(\pi_\infty)=0$.   Since  $\pi_{\infty}\in\Pibar$,  this
  reveals that $\pi_{\infty}\in\Pibar^\star$ and
  \begin{equation*}
    \inf_{\pi^\star\in\Pibar^\star}\norm{\pi_{m_k}-\pi^\star}\le
    \norm{\pi_{n_k}-\pi_{\infty}}\rightarrow 0, 
  \end{equation*}
  in contradiction with \eqref{eq:contradiction}.   Thus, there does not exist
  a convergent subsequence $\{\pi_{m_k}\}_{k\geq 1}$ of $\{\pi_m\}_{m\geq 1}$,
  completing the contradiction argument.

  We now  return to the contraposition  argument. The existence of  a sequence
  $\{\pi_m\}_{m\geq  1}$  of elements  of  $\Pibar$  not having  a  convergent
  subsequence  implies   that  $\Pibar$  is  not   sequentially  compact  and,
  therefore, that it is not compact.  By Lemma~\ref{lem:closurecompact}, $\Pi$
  is not totally bounded. This completes the proof.
\end{proof}

Recall  the  definition  \eqref{eq:IF}  of  $\xF$. The  next  lemma  uses  the
following definitions.  We let $\ell^{\infty}(\xF)$ denote the metric space of
all bounded functions $z : \xF\rightarrow\xR$, equipped with the supremum norm
and,  for   any  $r>0$,  $\Pi_r\equiv\{\pi\in\Pi  :   \Reg(\pi)\le  r\}$.   If
$\Pibar^\star$  is nonempty  (for instance,  if  $\Pi$ is  totally bounded  by
Lemma~\ref{lem:Pistarnonempty}) we let, for any $\pi^\star\in\Pibar^\star$ and
$s>0$, $B_{\Pi}(\pi^\star,s)\equiv \{\pi\in\Pi  : \norm{\pi^\star-\pi}\le s\}$
denote  the  intersection   of  the  radius-$s$  $L^2(P)$   ball  centered  at
$\pi^\star$ and  the collection $\Pi$.  Because  $\Pibar^\star\subset \Pibar$,
$B_{\Pi}(\pi^\star,s)$ is nonempty.

\begin{lemma}\label{lem:gsemicont}
  Define $g : \ell^\infty(\xF)\times(0,\infty)\rightarrow\xR$ as
  \begin{equation*}
    g(z,r)\equiv  \sup_{\pi\in  \Pi_r}  z(\IF_{\pi}) -  \limsup_{s\downarrow  0}
    \sup_{\pi^\star\in\Pibar^\star}      \inf_{\pi\in      B_{\Pi}(\pi^\star,s)}
    z(\IF_{\pi}). 
  \end{equation*}
  Let $z  \in \ell^{\infty} (\xF)$ be  $\norm{\cdot}$-uniformly continuous and
  let   $\{(z_m,r_m)\}_{m\geq    1}$   be   a   sequence    with   values   in
  $\ell^\infty(\xF)\times(0,\infty)$                 such                 that
  $\sup_{f\in\xF}|z_m(f)  - z(f)|  +  |r_{m}|  \to 0$.   If  $\Pi$ is  totally
  bounded, then
  \begin{equation*}
    \limsup_{m\rightarrow\infty} g(z_m,r_m)\le 0.
  \end{equation*}
\end{lemma}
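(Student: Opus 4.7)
The plan is to reduce first to an assertion about the limit $z$, then invoke Lemma~\ref{lem:cont} to move near-optimal policies in $\Pi$ to their closest counterparts in $\Pibar^\star$, and finally exploit the $L^2(P)$-uniform continuity of $z$ to equate the two terms in $g$ up to an arbitrarily small error.

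For the first reduction, set $\eta_m\equiv \sup_{f\in\xF}|z_m(f)-z(f)|$; by hypothesis $\eta_m=o(1)$. A uniform perturbation of $z$ by at most $\eta_m$ perturbs each of the two terms in $g(\cdot,r_m)$ by at most $\eta_m$, so $g(z_m,r_m)\le g(z,r_m)+2\eta_m$. It therefore suffices to prove $\limsup_m g(z,r_m)\le 0$.

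Now fix $\epsilon>0$. Under the ambient bounded-$Y$ and strong-positivity assumptions, the explicit form~\eqref{eq:IF:pi} yields a Lipschitz bound $\norm{\IF_\pi-\IF_{\pi'}}\le C\norm{\pi-\pi'}$ for all $\pi,\pi'\in\Pibar$; combined with $\norm{\cdot}$-uniform continuity of $z$ on $\xF$, this gives $\rho>0$ such that $\norm{\pi-\pi'}\le 2\rho$ implies $|z(\IF_\pi)-z(\IF_{\pi'})|\le \epsilon/3$. By Lemma~\ref{lem:cont}, I can choose $r_0>0$ so that every $\pi\in\Pi$ with $\Reg(\pi)\le r_0$ admits some $\pi^\star\in\Pibar^\star$ with $\norm{\pi-\pi^\star}\le \rho$. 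For each $m$ large enough that $r_m\le r_0$, pick a near-maximizer $\pi_m\in\Pi_{r_m}$ satisfying $z(\IF_{\pi_m})\ge \sup_{\pi\in\Pi_{r_m}}z(\IF_\pi)-\epsilon/3$ and an associated $\pi^\star_m\in\Pibar^\star$. For any $s\in(0,\rho]$ and any $\pi\in B_\Pi(\pi^\star_m,s)$ (the ball is nonempty because $\pi^\star_m\in\Pibar$), the triangle inequality gives $\norm{\pi-\pi_m}\le 2\rho$, hence $z(\IF_\pi)\ge z(\IF_{\pi_m})-\epsilon/3$. Taking the infimum over this ball and then the supremum over $\pi^\star\in\Pibar^\star$ gives, for every $s\in(0,\rho]$,
\[
\sup_{\pi^\star\in\Pibar^\star}\inf_{\pi\in B_\Pi(\pi^\star,s)}z(\IF_\pi)\ge z(\IF_{\pi_m})-\tfrac{\epsilon}{3}\ge \sup_{\pi\in\Pi_{r_m}}z(\IF_\pi)-\tfrac{2\epsilon}{3},
\]
so the $\limsup_{s\downarrow 0}$ of the left side obeys the same lower bound, which rearranges to $g(z,r_m)\le 2\epsilon/3$. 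Hence $\limsup_m g(z,r_m)\le 2\epsilon/3$, and letting $\epsilon\downarrow 0$ completes the proof.

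The main obstacle I anticipate is coordinating the quantifiers around the outer $\limsup_{s\downarrow 0}$ in the definition of $g$: one needs a single $\pi^\star_m\in\Pibar^\star$ (depending on $m$ but not on $s$) that lies near $\pi_m$ and induces a uniform lower bound on the inner infimum for all small $s$. Lemma~\ref{lem:cont} supplies exactly such a $\pi^\star_m$, while the Lipschitz estimate on $\pi\mapsto\IF_\pi$ (routine but needed, via bounded $Y$ and strong positivity) lets uniform continuity of $z$ on $\xF$ descend to uniform continuity of $\pi\mapsto z(\IF_\pi)$ on $\Pibar$.
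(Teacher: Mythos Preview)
Your proof is correct and follows essentially the same approach as the paper's: both reduce from $z_m$ to $z$ via a $2\eta_m$ bound, invoke Lemma~\ref{lem:cont} to produce a $\pi^\star$ close to a near-maximizer in $\Pi_{r_m}$, use the Lipschitz estimate $\norm{\IF_\pi-\IF_{\pi'}}\lesssim\norm{\pi-\pi'}$ (the paper's \eqref{eq:fpione:fpitwo}) together with the uniform continuity of $z$, and conclude by bounding the inner infimum over $B_\Pi(\pi^\star,s)$. The only difference is cosmetic: you organize the argument with a fixed $\epsilon$ and a uniform modulus $\rho$, whereas the paper tracks $o_r(1)$ terms along a single parameter $r$; your version is arguably cleaner in handling the outer $\limsup_{s\downarrow 0}$.
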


The following corollary will prove useful.
\begin{corollary}
  \label{lem:corollary:gsemicont}
  Recall   the  definition   of  $g$   from  Lemma~\ref{lem:gsemicont}.    Let
  $h :  \ell^\infty(\xF)\times[0,\infty)\rightarrow\xR$ be such that,  for all
  $z \in \ell^\infty(\xF)$,
  \begin{equation*}
    h(z,  r)  \equiv  \max\left(g(z,r),  0\right) \textnormal{  if  $r>0$  and
      $h(z,0) \equiv 0$}.
  \end{equation*}
  Let $z\in \ell^{\infty} (\xF)$ be $\norm{\cdot}$-uniformly continuous.  If
  $\Pi$ is totally bounded, then $h$ is continuous at $(z,0)$.
\end{corollary}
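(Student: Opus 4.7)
The plan is to deduce continuity of $h$ at $(z,0)$ directly from Lemma~\ref{lem:gsemicont}. Since $h \geq 0$ pointwise (it is either $0$ by definition or a max with $0$) and $h(z,0) = 0$, it suffices to show that for every sequence $\{(z_m, r_m)\}_{m\geq 1}$ in $\ell^\infty(\xF)\times[0,\infty)$ with $\sup_{f\in\xF}|z_m(f)-z(f)| + |r_m| \to 0$, one has $\limsup_{m\to\infty} h(z_m, r_m) \leq 0$.

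The key step is to split the index set into $I_0 \equiv \{m : r_m = 0\}$ and $I_+ \equiv \{m : r_m > 0\}$. For $m \in I_0$, the definition of $h$ immediately gives $h(z_m, r_m) = 0$, so these indices contribute nothing to the $\limsup$. For indices in $I_+$ (if this set is infinite), the restricted sequence $\{(z_m, r_m)\}_{m \in I_+}$ takes values in $\ell^\infty(\xF) \times (0,\infty)$ and still satisfies $\sup_{f \in \xF}|z_m(f) - z(f)| + |r_m| \to 0$, so Lemma~\ref{lem:gsemicont} applies and yields $\limsup_{m \in I_+} g(z_m, r_m) \leq 0$. Taking maxima with zero preserves upper bounds in the limit superior, so $\limsup_{m \in I_+} h(z_m, r_m) = \limsup_{m \in I_+} \max(g(z_m, r_m), 0) \leq 0$.

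Combining the two cases, $\limsup_{m\to\infty} h(z_m, r_m) \leq 0$, which together with the lower bound $h \geq 0$ forces $h(z_m, r_m) \to 0 = h(z,0)$, establishing continuity of $h$ at $(z,0)$.

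There is no real obstacle here beyond correctly handling the boundary case $r_m = 0$, which is only outside the scope of Lemma~\ref{lem:gsemicont} because that lemma is phrased for $r_m > 0$; the inclusion of $r = 0$ in the domain of $h$ and the explicit definition $h(z,0) = 0$ are exactly what allow one to stitch the two subcases together into a continuity statement.
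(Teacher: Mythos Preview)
Your proof is correct and follows essentially the same route as the paper: reduce continuity at $(z,0)$ to a sequential statement and invoke Lemma~\ref{lem:gsemicont}. The paper's own argument is terser and does not explicitly separate the indices with $r_m=0$ from those with $r_m>0$; your split into $I_0$ and $I_+$, together with the observation that $h\ge 0$ forces $\limsup h(z_m,r_m)\le 0$ to upgrade to actual convergence, simply makes explicit the details the paper leaves implicit.
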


\begin{proof}[Proof          of         Lemma~\ref{lem:gsemicont}          and
  Corollary~\ref{lem:corollary:gsemicont}]
  Fix a  sequence $\{(z_m,r_m)\}_{m\geq 1}$  satisfying the conditions  of the
  Lemma~\ref{lem:gsemicont}. Observe that, for every $m\geq 1$,
  \begin{eqnarray*}
    |g(z_m,r_m) - g(z,r_m)| 
    &\leq& \left|\sup_{\pi\in\Pi_{r_{m}}}
           z_{m}(\IF_{\pi}) - \sup_{\pi\in\Pi_{r_{m}}} z(\IF_{\pi}) \right|\\
    && + \left|\limsup_{s\downarrow 0}\sup_{\pi^\star\in\Pibar^\star}\inf_{\pi\in
       B_{\Pi}(\pi^\star,s)}    z_{m}(\IF_{\pi})    -    \limsup_{s\downarrow
       0}\sup_{\pi^\star\in\Pibar^\star}\inf_{\pi\in    B_{\Pi}(\pi^\star,s)}
       z(\IF_{\pi})\right|\\
    &\leq& 2 \sup_{\pi\in\Pi} |z_{m}(\IF_{\pi}) - z(\IF_{\pi})|
  \end{eqnarray*}
  where  the  above  RHS  expression  is  $o(1)$  because  $z_{m}  \to  z$  in
  $\ell^{\infty}(\xF)$. Therefore,
  \begin{eqnarray}
    g(z_m,r_m)
    &=& g(z_m,r_m) - g(z,r_m) + g(z,r_m)  \nonumber \\
    &=& g(z_m,r_m) - g(z,r_m) + 
        \left[\sup_{\pi\in\Pi_{r_m}}  z(\IF_{\pi})   -  \limsup_{s\downarrow
        0}\sup_{\pi^\star\in\Pibar^\star}\inf_{\pi\in        B_{\Pi}(\pi^\star,s)}
        z(\IF_{\pi})\right]. \nonumber\\
    &\leq& \left[\sup_{\pi\in\Pi_{r_m}} z(\IF_{\pi}) - \limsup_{s\downarrow 
           0}\sup_{\pi^\star\in\Pibar^\star}\inf_{\pi\in        B_{\Pi}(\pi^\star,s)}
           z(\IF_{\pi})\right] + o(1).  \label{eq:gcont} 
  \end{eqnarray}
  Let us  show now that the  RHS expression in \eqref{eq:gcont}  is $o(1)$.

  For any $r> 0$, there exists a $\pi_r\in\Pi_r$ such that
  \begin{equation}
    \label{eq:approx:RHS:one}
    \sup_{\pi\in\Pi_r} z(\IF_{\pi}) \leq z(\IF_{\pi_r}) + r.
  \end{equation}
  Furthermore, there exists a $\pi_r^\star\in\Pibar^\star$ such that
  \begin{equation}
    \norm{\pi_r^\star-\pi_r}\le \inf_{\pi^\star\in \Pibar^\star}
    \norm{\pi^\star - \pi_r} + r\le \sup_{\pi\in\Pi_r}
    \inf_{\pi^\star\in \Pibar^\star} \norm{\pi^\star - \pi} +
    r. \label{eq:invokecont} 
  \end{equation}
  Likewise, there exists $\tilde{\pi}_r\in B_{\Pi}(\pi_r^\star,r)$ such that
  \begin{eqnarray}
    \notag
    z(\IF_{\tilde{\pi}_r}) 
    &\le& \inf_{\pi\in B_{\Pi}(\pi_r^\star,r)} z(\IF_{\pi})+r\le 
          \sup_{\pi^\star\in\Pibar^\star}\inf_{\pi\in    B_{\Pi}(\pi^\star,r)}
          z(\IF_{\pi})+r \\ 
    \label{eq:approx:RHS:three}
    &=&  \left(\limsup_{s\downarrow  0}\sup_{\pi^\star\in\Pibar^\star}\inf_{\pi\in
        B_{\Pi}(\pi^\star,s)} z(\IF_{\pi}) +o_{r}(1) \right) + r,
  \end{eqnarray} 
  where the above equality holds by  the definition of the limit superior (the
  $o_{r}(1)$ above  represents the term's  behavior as $r\rightarrow  0$).  In
  light of \eqref{eq:approx:RHS:one}  and \eqref{eq:approx:RHS:three}, we thus
  have
  \begin{equation}
    \sup_{\pi\in\Pi_{r}}      z(\IF_{\pi})       -      \limsup_{s\downarrow
      0}\sup_{\pi^\star\in\Pibar^\star}\inf_{\pi\in      B_{\Pi}(\pi^\star,s)}
    z(\IF_{\pi})  \le  z(\IF_{\pi_{r}}) -  z(\IF_{\tilde{\pi}_{r}})  +
    2r + o_{r}(1).\label{eq:useUC} 
  \end{equation}

  By    the     $\norm{\cdot}$-uniform         continuity         of         $z$,
  $|z(\IF_{\pi_{r}})   -    z(\IF_{\tilde{\pi}_{r}})|   =   o_{r}    (1)$   if
  $\norm{\IF_{\pi_{r}}  - \IF_{\tilde{\pi}_{r}}}  = o_{r}  (1)$.  Let  us show
  that       the      latter       condition       is      met.        Because
  $\tilde{\pi}_r\in   B_{\Pi}(\pi_r^\star,r)$,    the   triangle   inequality,
  \eqref{eq:invokecont} and \eqref{eq:fpione:fpitwo} imply that
  \begin{eqnarray}
    \notag
    \norm{\IF_{\pi_r} - \IF_{\tilde{\pi}_r}} 
    &\le&   \norm{\IF_{\pi_r}   -
          \IF_{\pi_r^\star}} + \norm{\IF_{\pi_r^\star}-\IF_{\tilde{\pi}_r}}\\
    &\lesssim& \norm{\pi_r -  \pi_r^\star} + \norm{\pi_{r}^{\star} -
               \tilde{\pi}_{r}}  \leq   \sup_{\pi\in\Pi_r}  \inf_{\pi^\star\in
               \Pibar^\star}      \norm{\pi^\star     -      \pi}     +      2
               r. \label{eq:IFscollapsing} 
  \end{eqnarray}
  Because     $\Pi_r\subset     \{\pi\in\Pibar    :     \Reg(\pi)\le     r\}$,
  Lemma~\ref{lem:cont}  (which  applies  because  $\Pi$  is  totally  bounded)
  implies that
  \begin{equation*}
    \limsup_{r\downarrow     0}      \sup_{\pi\in\Pi_r}     \inf_{\pi^\star\in
      \Pibar^\star} \norm{\pi-\pi^\star}=0,
  \end{equation*}
  from which we deduce that the RHS of (\ref{eq:IFscollapsing}) is $o_{r}(1)$.
  In  summary, $\norm{\IF_{\pi_{r}}  -  \IF_{\tilde{\pi}_{r}}}  = o_{r}  (1)$,
  hence  $|z(\IF_{\pi_{r}})  -  z(\IF_{\tilde{\pi}_{r}})|  =  o_{r}  (1)$  and
  consequently, by \eqref{eq:useUC}, 
  \begin{equation*}
    \sup_{\pi\in\Pi_{r}}      z(\IF_{\pi})       -      \limsup_{s\downarrow
      0}\sup_{\pi^\star\in\Pibar^\star}\inf_{\pi\in      B_{\Pi}(\pi^\star,s)}
    z(\IF_{\pi}) = o_{r}(1).
  \end{equation*}

  Taking  $r  =  r_{m}$ and  using  the  above  result  reveals that  the  RHS
  expression in \eqref{eq:gcont} is $o(1)$ indeed. This completes the proof of
  Lemma~\ref{lem:gsemicont}.

  In    the    context   of    Corollary~\ref{lem:corollary:gsemicont},    let
  $\{(z_{m},   r_{m})\}_{m   \geq  1}$   be   a   sequence  with   values   in
  $\ell^{\infty}(\xF)      \times      [0,\infty)$     and      such      that
  $\sup_{f\in\xF}|z_{m}(f)     -      z(f)|     +     |r_{m}|      \to     0$.
  Lemma~\ref{lem:gsemicont}                    implies                    that
  $h(z_{m}, r_{m})  \to h(z,0) \equiv  0$. Since the sequence  was arbitrarily
  chosen,   $h$  is   indeed  continuous   at   $(z,0)$  and   the  proof   of
  Corollary~\ref{lem:corollary:gsemicont} is complete.
\end{proof}

\begin{lemma}
  \label{lem:Pi:Donsker:F:Donsker}
  If $\Pi$ is Donsker, then $\xF$ is also Donsker.
\end{lemma}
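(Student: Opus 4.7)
The plan is to write each $\IF_\pi$ as an affine function of $\pi(x)$ with fixed (independent of $\pi$), bounded coefficients plus a uniformly bounded $\pi$-indexed constant, and then to invoke standard preservation results for Donsker classes.

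First, I would rewrite the efficient influence function in~\eqref{eq:IF:pi} algebraically.  Because both $a$ and $\pi(x)$ take values in $\{-1,1\}$, one has $\Ind\{a=\pi(x)\} = \tfrac{1}{2}(1+a\pi(x))$, and, writing $Q(a,x) \equiv \E[Y \mid A=a, X=x]$,
\begin{equation*}
  \E[Y \mid A = \pi(x), X = x] = \tfrac{1}{2}(1+\pi(x))\, Q(1,x) + \tfrac{1}{2}(1-\pi(x))\, Q(-1,x).
\end{equation*}
Substituting into~\eqref{eq:IF:pi} yields a decomposition of the form
\begin{equation*}
  \IF_\pi(o) = \pi(x)\, u(o) + v(o) - \xV(\pi),
\end{equation*}
where $u$ and $v$ are fixed bounded measurable functions on $\xO$ not depending on $\pi$, with sup-norms controlled by the universal bound $M$ and by the positivity constant $\delta$.

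Second, I would lift $\Pi$ from $\xX$ to $\xO$ by setting $\tilde{\pi}(o) \equiv \pi(x)$ and $\tilde{\Pi} \equiv \{\tilde{\pi} : \pi \in \Pi\}$.  Since the $P$-empirical process indexed by $\tilde{\pi}$ coincides with the $X$-marginal empirical process indexed by $\pi$, the $P$-Donsker property of $\Pi$ transfers directly to $\tilde{\Pi}$.  I would then apply standard preservation results (see, e.g., Section~2.10 of \cite{vanderVaartWellner1996}) in three steps: (i) multiplying each element of $\tilde{\Pi}$ by the fixed bounded function $u$ preserves the Donsker property; (ii) adding the fixed function $v$ preserves it as well; and (iii) adding the $\pi$-indexed constant $-\xV(\pi)$ preserves it, because the uniformly bounded class of constant functions on $\xO$ is trivially Donsker and the indexed sum of two uniformly bounded Donsker classes is Donsker.

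The main bookkeeping obstacle is verifying the bounded-envelope hypotheses of the preservation theorems invoked above, which however follow immediately from the uniform bound $\sup_{\pi \in \Pi}\norm{\IF_\pi}_\infty \le M$ noted just after~\eqref{eq:IF:pi} together with the strong positivity assumption on $P(A \mid X)$.  Measurability issues are left implicit in accordance with the paper's standing convention.
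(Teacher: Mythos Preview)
Your proof is correct and follows the same overall strategy as the paper: rewrite $\IF_\pi$ algebraically in terms of $\pi(x)$ and then invoke preservation results from Section~2.10 of \cite{vanderVaartWellner1996}, handling the $\pi$-indexed constant $-\xV(\pi)$ separately via the Donsker property of a uniformly bounded class of constants.  The difference is in the decomposition and the preservation lemma used.  The paper writes $2\Ind\{a=\pi(x)\}=|a+\pi(x)|$ and applies the general Lipschitz-transformation theorem (Theorem~2.10.6 in \cite{vanderVaartWellner1996}) to the five-tuple $(f_1,f_2,\pi,f_4,f_5)$.  You instead exploit the identity $\Ind\{a=\pi(x)\}=\tfrac{1}{2}(1+a\pi(x))$, which reveals that $\IF_\pi$ is \emph{affine} in $\pi(x)$; this lets you get by with the more elementary preservation facts (product with a single fixed bounded function, translation by a fixed function, and sum with a bounded class of constants).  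Your argument is therefore a bit cleaner in the binary-action case.  The paper's Lipschitz formulation, however, pays off in the multi-action extension of Section~\ref{subsec:discr:action}, where $\Ind\{a=\pi(x)\}$ is no longer affine in $\pi(x)$ and the analogous step is carried out via a Lipschitz bump $\theta(\pi(x)-a)$.
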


  \begin{proof}
    Similar to \eqref{eq:key:one}, the key  is the following remark: for every
    policy $\pi : \xX \to \{-1,1\}$,
    \begin{eqnarray}
      \notag
      2 \IF_{\pi} (O) 
      &=& |A+\pi(X)| \frac{Y - Q(A,X)}{P(A|X)} \\
      \label{eq:key:two}
      && + (1+\pi(X)) Q(1,X) + (1-\pi(X)) Q(-1,X) - 2 \xV(\pi). 
    \end{eqnarray}
    For   future   use,   we    first   note   that   \eqref{eq:key:one}   and
    \eqref{eq:nupione:nupitwo} imply, for any $\pi_{1}, \pi_{2} \in \Pi$,
    \begin{equation*}
      |\IF_{\pi_{1}}(O) -  \IF_{\pi_{2}}(O)| \lesssim  |\pi_{1} (X)  - \pi_{2}
      (X)| + \norm{\pi_{1} - \pi_{2}}
    \end{equation*}
    hence 
    \begin{equation}
      \label{eq:fpione:fpitwo}
      \norm{\IF_{\pi_{1}} - \IF_{\pi_{2}}} \lesssim \norm{\pi_{1} - \pi_{2}}. 
    \end{equation}

    Introduce        $\phi:\xR^{5}       \to        \xR$       given        by
    $2\phi(u) =  u_{1} |u_{2}+u_{3}| +  (1+u_{3})u_{4} + (1+u_{3})  u_{5}$ and
    $f_{1},    f_{2},   f_{4},    f_{5}$    be   the    function   given    by
    $f_{1}(o)  \equiv   (y  -  Q(a,x))/P(A=a|X=x)$,  $f_{2}   (o)  \equiv  x$,
    $f_{4} (o) \equiv  Q(1,x)$ and $f_{5}(o) \equiv  Q(-1,x)$.  Let $\xF_{1}$,
    $\xF_{2}$,   $\xF_{4}$,   $\xF_{5}$   be   the   singletons   $\{f_{1}\}$,
    $\{f_{2}\}$, $\{f_{4}\}$  and $\{f_{5}\}$, each  of them a  Donsker class.
    Let
    $\xtF  \equiv  \xF_{1}  \times  \xF_{2} \times  \Pi  \times\xF_{4}  \times
    \xF_{5}$  and note  that $\phi\circ  \bff =  \tIF_{\pi}$ if  $\bff\in\xtF$
    writes  as  $\bff=(f_{1},  f_{2},  \pi,   f_{4},  f_{5})$.   In  light  of
    \eqref{eq:key:two},      observe       now      that,       for      every
    $\bff_{1} = (f_{1}, f_{2}, \pi_{1}, f_{4}, f_{5}), \bff_{2}=(f_{1}, f_{2},
    \pi_{2}, f_{4}, f_{5}) \in \xtF$, it holds that
    \begin{equation*}
      |\phi \circ  \bff_{1} (o) -  \phi \circ \bff_{2} (o)|  \lesssim |\pi_{1}
      (x) - \pi_{2} (x)|
    \end{equation*}
    (the bound  on $Y$  implies that  $\norm{\gamma}_{\infty}$ is  finite). By
    \citep[][Theorem~2.10.6]{vanderVaartWellner1996},  whose   conditions  are
    obviously met, $\phi \circ \xtF  = \{\tIF_{\pi} : \pi\in\Pi\}$ is Donsker.
    Because $\Lambda  \equiv \{\xV(\pi) : \pi  \in \Pi\}$ (viewed as  a set of
    constant functions  with a  uniformly bounded  sup-norm) is  also Donsker,
    \citep[][Example~1.10.7]{vanderVaartWellner1996}        yields        that
    $\{\tIF_{\pi} - \lambda : \pi \in  \Pi, \lambda \in \Lambda\}$ is Donsker,
    and so is its subset $\xF$.
  \end{proof}

\begin{proof}[Proof of Theorem~\ref{thm:regDonsker}]\hypertarget{proof:thmregDonsker}
  In this proof, we make the dependence of $\pihat$ on $n$ explicit by writing
  $\pihat_n$.  By Lemma~\ref{lem:Pi:Donsker:F:Donsker},  $\Pi$ Donsker implies
  $\xF$ Donsker.   Consider the  empirical process $\xG_n$  as the  element of
  $\ell^{\infty}(\xF)$ characterized  by $\xG_n f \equiv  n^{1/2}(P_n-P)f$ for
  every $f\in\xF$.   We let $\xG_P\in\ell^{\infty}(\xF)$ denote  the zero-mean
  Gaussian        process        with        covariance        given        by
  $\E[\xG_P f \xG_P g] = Pfg - Pf Pg$.

  Since     $\Pi$      Donsker     implies     $\Pi$      totally     bounded,
  Lemma~\ref{lem:Pistarnonempty}    guarantees    the     existence    of    a
  $\pi^\star\in\Pibar^\star$.          For        any         $s>0$        and
  $\pi_s^\star\in  B_{\Pi}(\pi^\star,s)\subset  \Pi$,  (\ref{eq:valmax})  then
  \eqref{eq:valest} combined with \eqref{eq:nupione:nupitwo} yield in turn the
  first and second inequalities below:
  \begin{eqnarray}
    0\leq \Reg(\pihat_n)
    &=&  \xV^\star -  \xV(\pihat_n) =  [\xV(\pi_s^{\star}) -  \xV(\pihat_n)] +
        [\xV(\pi^{\star}) - \xV(\pi_s^{\star})] \nonumber \\ 
    &=&   (\xVhat  -   \xV)(\pihat_n)  -   (\xVhat  -   \xV)(\pi_s^{\star})  +
        [\xVhat(\pi_s^{\star}) - \xVhat(\pihat_{n})] + 
        [\xV(\pi^{\star}) - \xV(\pi_s^{\star})] \nonumber \\ 
    &\le&     (\xVhat-\xV)(\pihat_n)     -     (\xVhat-\xV)(\pi_s^\star)     +
          \left[\xV(\pi^\star)   -   \xV(\pi_s^\star)\right]   +   o_P(\Rem_n)
          \nonumber 
    \\ 
    \label{eq:fixeds:one}
    &\lesssim&               n^{-1/2}\left[\xG_n\IF_{\pihat_n}               -
               \xG_n\IF_{\pi_s^\star}\right] + s + 
               [1+o_P(1)]\Rem_n \\ 
    \label{eq:fixeds:two}
    &\leq&       2n^{-1/2}\sup_{\IF\in\xF}       |\xG_n \IF|        +       s       +
           [1+o_P(1)]\Rem_n. 
  \end{eqnarray}

  Since $z\mapsto \sup_{\IF  \in \xF} |z(f)|$ is continuous  and $\xF$ is
    Donsker,               the                continuous               mapping
    theorem~\citep[][Theorem~1.3.6]{vanderVaartWellner1996} implies  that the
  leftmost  term in  the  above  RHS sum  is  $O_{P} (n^{-1/2})$.   Therefore,
  \eqref{eq:fixeds:two} and $\Rem_{n} = o_{P} (n^{-1/2})$ imply
  \begin{equation*}
    0\leq  \Reg(\pihat_n) \lesssim  s +  O_{P} (n^{-1/2})  + [1+o_P(1)]  o_{P}
    (n^{-1/2}) 
  \end{equation*}
  where the random terms  do not depend on $s$. By letting $s$  go to zero, we
  obtain  $\Reg(\pihat_n) =  O_{P}  (n^{-1/2})$. The  remainder  of the  proof
  tightens this result to $\Reg(\pihat_n)= o_P(n^{-1/2})$.

  Let      us      go      back     to      (\ref{eq:fixeds:one}).       Since
  $\Rem_{n} = o_{P} (n^{-1/2})$, it also yields the tighter bound
  \begin{equation}
    \label{eq:firstresult}
    0       \leq       \Reg(\pihat_n)      \lesssim       n^{-1/2}\liminf_{s\downarrow
      0}\inf_{\pi^\star\in\Pibar^\star}\sup_{\pi_s^\star\in
      B_{\Pi}(\pi^\star,s)}\left[\xG_n\IF_{\pihat_n}-\xG_n\IF_{\pi_s^\star}\right]         +
    [1+o_P(1)] o_{P} (n^{-1/2}).  
  \end{equation}
  Let $g$  be defined as  in Lemma~\ref{lem:gsemicont}.  Note that  the second
  term  in   (\ref{eq:firstresult})  does  not  depend   on  $\pihat_n$.   Let
  $\{t_n\}_{n\geq  1}$   be  a  sequence   with  positive  values   such  that
  $t_{n}    \downarrow    0$.     As    $\pihat_n$    trivially    falls    in
  $\Pi_{\Reg(\pihat_n)+t_n}$,     we    can     take    a     supremum    over
  $\pi\in\Pi_{\Reg(\pihat_n)+t_n}$.      Multiplying     both     sides     of
  (\ref{eq:firstresult}) by $n^{1/2}$, we see that
  \begin{eqnarray*}
    0 \leq n^{1/2}\Reg(\pihat_n) 
    &\le&      \sup_{\pi\in\Pi_{\Reg(\pihat_n)+t_n}}     \xG_n\IF_{\pi}      -
          \limsup_{s\downarrow 0} \sup_{\pi^\star\in\Pibar^\star} \inf_{\pi\in
          B_{\Pi}(\pi^\star,s)} \xG_n(\IF_{\pi}) + o_P(1) \\ 
    &=& g(\xG_n,\Reg(\pihat_n)+t_n) + o_P(1).
  \end{eqnarray*}
  Above  we used  $\Reg(\pihat_n)+t_n$ rather  than $\Reg(\pihat_n)$  to avoid
  separately handling  the cases  where $\Pibar^\star\cap \Pi$  is and  is not
  empty.  
  
  The   conclusion  is   at  hand.    Recall  the   definition  of   $h$  from
  Corollary~\ref{lem:corollary:gsemicont}. Clearly the previous display yields
  the bounds
  \begin{equation}
    \label{eq:secondresult}
    0 \leq n^{1/2}\Reg(\pihat_n) \le h(\xG_n,\Reg(\pihat_n)+t_n) + o_P(1).
  \end{equation}
  We  have  already  established  that $\Reg(\pihat_n)  =  o_{P}  (1)$,  hence
  $0 < t_{n}  \leq \Reg(\pihat_n)+t_n = o_{P} (1)$ as  well.  Because $\xF$ is
  Donsker,  $\xG_n\leadsto  \xG_P$  in distribution  on  $\ell^{\infty}(\xF)$.
  Therefore,   $(\xG_n,   \Reg(\pihat_n)+t_n)   \leadsto  (\xG_{P},   0)$   in
  distribution on  $\ell^{\infty}(\xF) \times [0,\infty)$.  Almost  all sample
  paths  of  $\xG_P$  are  uniformly  continuous  on  $\xF$  with  respect  to
  $\norm{\cdot}$     \citep[][Section     2.1]{vanderVaartWellner1996},     so
  Corollary~\ref{lem:corollary:gsemicont}  applies   almost  surely   and  the
  continuous               mapping                theorem               yields
  $h(\xG_n,\Reg(\pihat_n)+t_n)   \leadsto  h(\xG_P,0)$   in  distribution   in
  $\ell^{\infty}(\xF)$.  This convergence also occurs in probability since the
  limit  is  almost  surely   constant  (zero).   By  (\ref{eq:secondresult}),
  $0\le \Reg(\pihat_n)=o_P(n^{-1/2})$. This completes the proof.
\end{proof}

\section{Additional Proofs}\label{app:generalERM}

\subsection{Sketch of Proof of Theorem~\ref{thm:generalRegDonsker}}

The proof of Theorem~\ref{thm:generalRegDonsker}  is very similar to that
  of Theorem~\ref{thm:regDonsker}. We only sketch  it and point out the places
  where they differ.

\begin{proof}[Sketch of Proof of Theorem~\ref{thm:generalRegDonsker}]\hypertarget{proof:thmgeneralRegDonsker}
  Obviously  (\ref{eq:valunifcons}) implies  that  $\Reg(\cdot)$ is  uniformly
  continuous on  $\overline{\Pi}$ with  respect to  $\norm{\cdot}$. Assumption
  (\ref{eq:Pitb})     then      shows     that     the      implication     of
  Lemma~\ref{lem:Pistarnonempty} also  holds in our context,  {\em i.e.}, that
  $\inf_{\pi\in\Pibar}\mathcal{R}(\pi)=0$  and  that   $\Pibar^\star$  is  not
  empty.   As $\Reg(\cdot)$  is  uniformly continous  and  the implication  of
  Lemma~\ref{lem:Pistarnonempty}        holds,       (\ref{eq:ca})        from
  Lemma~\ref{lem:cont}  also   holds.   Furthermore,   (\ref{eq:IFuc})  yields
  Lemma~\ref{lem:gsemicont}, for which Corollary~\ref{lem:corollary:gsemicont}
  remains    a    valid   corollary.     We    will    not   make    use    of
  Lemma~\ref{lem:Pi:Donsker:F:Donsker}:   we   will   instead   use   directly
  \eqref{eq:unifconv}.

  We    now   have    the   tools    needed   to    modify   the    proof   of
  Theorem~\ref{thm:regDonsker}. Using  the results we have  obtained thus far,
  and    replacing    (\ref{eq:valmax})     by    (\ref{eq:generalERM})    and
  (\ref{eq:valest})  by (\ref{eq:estseq}),  we see  that \eqref{eq:fixeds:two}
  from the proof of Theorem~\ref{thm:regDonsker} can be replaced by
  \begin{eqnarray}
    0\le \Reg(\widehat{\pi}_n)
    &\lesssim&  r_n^{-1} \left[\widetilde{\mathbb{G}}_n
               \IF_{\widehat{\pi}_n} - \widetilde{\mathbb{G}}_n
               \IF_{\pi_s^\star}\right]                       +
               \left[\xV(\pi^\star)-\xV(\pi_s^\star)\right]   +
               o_P(r_n^{-1}) \label{eq:fixeds:two:general} \\ 
    &\le&  2r_n^{-1}   \sup_{\IF\in\mathcal{F}}|\widetilde{\mathbb{G}}_n  \IF|  +
          \left[\xV(\pi^\star)-\xV(\pi_s^\star)\right] + o_P(r_n^{-1}). \nonumber 
  \end{eqnarray}
  By      (\ref{eq:unifconv})       and      the       continuous      mapping
  theorem~~\citep[][Theorem~1.3.6]{vanderVaartWellner1996},
  $\sup_{\IF\in\mathcal{F}}|\widetilde{\mathbb{G}}_n \IF|=O_P(1)$.  Hence, the
  leading term  in the  final inequality is  $O_P(r_n^{-1})$, where  this term
  does not  depend on  $s$. The  final term also  does not  depend on  $s$. By
  (\ref{eq:valunifcons}),   the   middle   term   above  goes   to   zero   as
  $s\downarrow   0$,    where   this   convergence   is    uniform   in   both
  $\pi^\star\in\overline{\Pi}^\star$                                       and
  $\pi_s^\star\in                 B_{\Pi}(\pi^\star,s)$.                 Thus,
  $\Reg(\widehat{\pi}_n)=O_P(r_n^{-1})$.

  We tighten  this result  to $\Reg(\widehat{\pi}_n)=o_P(r_n^{-1})$ as  in the
  proof  of  Theorem~\ref{thm:regDonsker}.   In particular,  nearly  identical
  arguments to those used in that proof show that
  \begin{equation*}
    0\le                      r_n                     \Reg(\widehat{\pi}_n)\le
    g(\widetilde{\xG}_n,\Reg(\widehat{\pi}_n)+t_n)         +         o_P(1)\le
    h(\widetilde{\xG}_n,\Reg(\widehat{\pi}_n)+t_n) + o_P(1). 
  \end{equation*}
  The  proof  concludes  by   noting  that  \eqref{eq:unifconv}  includes  the
  condition  that almost  all sample  paths of  $\widetilde{\mathbb{G}}_P$ are
  uniformly continuous  on $\mathcal{F}$  with respect to  $\norm{\cdot}$, and
  thus   the    right-hand   side   above   is    $o_P(1)$.    In   conclusion
  $\Reg(\widehat{\pi}_n)=o_P(r_n^{-1})$.
\end{proof}

\subsection{Proof for Section~\ref{subsec:discr:action}}
\label{app:proof:discr:action}

\begin{proof}[Proof of Lemma~\ref{lem:discr:action}]
  If  the bounded  class  $\Pi$ is  Donsker,  then it  is  totally bounded  in
  $L^2(P)$,  so \eqref{eq:Pitb}  is  met.  By  Lemma~\ref{lem:closurecompact},
  $\Pibar$  is  then compact.   Let  $\theta$  be  a Lipschitz  function  from
  $[-2,2]$  to  $[0,1]$  such  that  $\theta(0)  =  1$  and  $\theta(u)=0$  if
  $|u|\geq \min_{a\neq a' \in \xA}  |a-a'|$.  Introducing $\theta$ is merely a
  trick    to     generalize    Lemma~\ref{lem:VRcont}.      In    particular,
  \eqref{eq:key:one} becomes
  \begin{equation*}
    Q(\pi(X), X) = \sum_{a \in \xA} \theta(\pi(X) - a) Q(a,X)
  \end{equation*}
  for every policy $\pi:\xX \to \xA$, yielding
  \begin{equation}
    \label{eq:Nu:discr:gen}
    |\xV(\pi_{1}) - \xV(\pi_{2})| \lesssim \norm{\pi_{1} - \pi_{2}}
  \end{equation}
  for every $\pi_{1}, \pi_{2} \in \Pibar$, hence \eqref{eq:valunifcons}.  We
  also generalize \eqref{eq:key:two}, which becomes
  \begin{eqnarray}
    \label{eq:key:two:gen}
    \IF_{\pi}  (O)  =  \sum_{a \in  \xA}  \theta(\pi(X)-a)  \left(\theta(A-a)
    \frac{Y - Q(A,X)}{P(A|X)} + Q(a,X) \right) - \xV(\pi) 
  \end{eqnarray}
  for every $\pi \in \Pibar$.  The above equality and \eqref{eq:Nu:discr:gen}
  imply
  $\norm{\IF_{\pi_{1}} -  \IF_{\pi_{2}}} \lesssim \norm{\pi_{1}  - \pi_{2}}$
  for every $\pi_{1}, \pi_{2} \in \Pibar$, hence \eqref{eq:IFuc}. The second
  part of  the proof  of Lemma~\ref{lem:Pi:Donsker:F:Donsker} can  easily be
  generalized to  derive that  $\xF$ is Donsker  from \eqref{eq:key:two:gen}
  and the fact that $\Pi$  is itself Donsker. Therefore, \eqref{eq:unifconv}
  is met and the proof is complete. 
\end{proof}

\subsection{Proofs for Section~\ref{subsec:median}}
\label{app:proof:median}

We start by proving Lemma~\ref{lem:medianreguc}.

\begin{proof}[Proof of Lemma~\ref{lem:medianreguc}]
  Set  arbitrarily  $\pi_{1},\pi_{2} \in\Pibar$  and  $m  \in [\xV(\pi_2)  \pm
  c]$.     We     suppose     without      loss     of     generality     that
  $\mathcal{V}(\pi_1)\le  \mathcal{V}(\pi_2)$.  We  will  use  the  fact  that
  (\ref{eq:derivbd})                        implies                       that
  $F_{\pi_1}(\mathcal{V}(\pi_1))=F_{\pi_2}(\mathcal{V}(\pi_2))=1/2$    several
  times in this proof.

  Firstly, note that
  \begin{eqnarray}
    \notag
    \left|F_{\pi_1}(m)-F_{\pi_2}(m)\right|
    &=&        
        \left|\E\left[\frac{\Ind\{A=\pi_{1}(X)\}                             -
        \Ind\{A=\pi_{2}(X)\}}{P(A|X)} \Ind\{Y\le m\}\right]\right| \\
    \label{eq:first:ineq}
    &\le&              \frac{1}{2}\E\left[\frac{\left|\pi_{1}(X)             -
          \pi_{2}(X)\right|}{P(A|X)}      \Ind\{Y\le       m\}\right]      \le
          k_1\norm{\pi_{1}-\pi_{2}}
  \end{eqnarray}
  where $k_1$  is a finite, positive  constant that only depends  on the lower
  bound on $P(A|X)$ from the strong positivity assumption.

  Secondly,    the   continuous    differentiability   of    $F_{\pi_2}$   and
  (\ref{eq:derivbd}) imply  the existence  of $\tilde{m} \in  [m, \xV(\pi_2)]$
  such that
  \begin{eqnarray*}
    \left|F_{\pi_2}(\xV(\pi_2))-F_{\pi_2}(m)\right|
    &=& \left|\xV(\pi_2)-m\right| \times \dot{F}_{\pi_2}(\tilde{m})\\
    &\ge&    \left|\xV(\pi_2)-m\right|   \times    \inf_{\pi\in\Pibar}\inf_{m\in
          [\xV(\pi)\pm c]} \dot{F}_{\pi}(m).
  \end{eqnarray*}
  Therefore, there exists a finite, positive constant $k_2$ such that
  \begin{equation}
    \label{eq:second:ineq} 
    \left|\xV(\pi_2)-m\right|                                              \le
    k_2\left|F_{\pi_2}(\xV(\pi_2))-F_{\pi_2}(m)\right|. 
  \end{equation}

  The remainder  of this  proof is broken  into two parts:  we will  show that
  {\em(i)}           $\xV(\pi_2)-\xV(\pi_1)\le            c$           implies
  $\xV(\pi_2)-\xV(\pi_1)\le   k_1    k_2\norm{\pi_1-\pi_2}$,   and   {\em(ii)}
  $\norm{\pi_1-\pi_2}<  c/k_1 k_2$  yields  $\xV(\pi_2)-\xV(\pi_1)\le c$.   By
  combining    these    two    results,    we    will    thus    prove    that
  $\xV(\pi_2)-\xV(\pi_1)\le     k_1k_2     \norm{\pi_1-\pi_2}$     for     all
  $\norm{\pi_1-\pi_2}$ sufficiently  small, and  this will complete  the proof
  ($k_1k_2$ does not depend on $\pi_1$ or $\pi_2$).

  Recall                                                                  that
  $F_{\pi_1}(\mathcal{V}(\pi_1))=F_{\pi_2}(\mathcal{V}(\pi_2))=1/2$.        If
  $\xV(\pi_2)-\xV(\pi_1)\le  c$,  then   combining  \eqref{eq:first:ineq}  and
  \eqref{eq:second:ineq} at $m=\mathcal{V}(\pi_1)$ establishes {\em(i)}:
  \begin{eqnarray*}
    \xV(\pi_2)-\xV(\pi_1)
    &\le&               k_2\left[F_{\pi_2}(\mathcal{V}(\pi_2))               -
          F_{\pi_2}(\mathcal{V}(\pi_1))\right] \\ 
    &=&                k_2\left[F_{\pi_1}(\mathcal{V}(\pi_1))                -
        F_{\pi_2}(\mathcal{V}(\pi_1))\right] \le k_1k_2 \norm{\pi_1-\pi_2}.  
  \end{eqnarray*}
  We     argue     {\em(ii)}      by     contraposition.      Suppose     that
  $\mathcal{V}(\pi_1)<\mathcal{V}(\pi_2)-c$. By the monotonicity of cumulative
  distribution                                                      functions,
  $F_{\pi_2}(\mathcal{V}(\pi_1))\le
  F_{\pi_2}(\mathcal{V}(\pi_2)-c)$. Combining this with \eqref{eq:second:ineq}
  at $m=\mathcal{V}(\pi_2)-c$,
  \begin{equation*}
    F_{\pi_2}(\mathcal{V}(\pi_2))      -      F_{\pi_2}(\mathcal{V}(\pi_1))\ge
    F_{\pi_2}(\mathcal{V}(\pi_2))     -     F_{\pi_2}(\mathcal{V}(\pi_2)-c)\ge
    k_2^{-1} c. 
  \end{equation*}
  By        \eqref{eq:first:ineq}        and       the        fact        that
  $F_{\pi_2}(\mathcal{V}(\pi_2))=F_{\pi_1}(\mathcal{V}(\pi_1))=1/2$,  the  LHS
  expression    is   smaller    than   $k_1\norm{\pi_1-\pi_2}$.     Therefore,
  $\norm{\pi_1-\pi_2}\ge c/k_1k_2$.
\end{proof}

\begin{lemma}\label{lem:derivbd}
  Suppose the existence of a deterministic  $L>0$ such that, for all $m\in\xR$
  and sufficiently small $\epsilon>0$, with $P$-probability one,
  \begin{equation*}
    P(m<Y\le m+\epsilon|X) \leq L\epsilon.
  \end{equation*}
  Then,    for   all    $m    \in    \xR$   and    $\pi_{1},\pi_{2}\in\Pibar$,
  $|\dot{F}_{\pi_{2}}(m)            -            \dot{F}_{\pi_{1}}(m)|\lesssim
  \norm{\pi_{1}-\pi_{2}}$.
\end{lemma}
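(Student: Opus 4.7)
The plan is to express $\dot{F}_{\pi_2}(m) - \dot{F}_{\pi_1}(m)$ as the limit of a difference quotient whose numerator can be controlled by the density-type bound, and then to pass to the limit. The starting observation, used repeatedly in Section~\ref{subsec:median} and in the proof of Lemma~\ref{lem:medianreguc}, is the inverse-probability-weighted representation
\begin{equation*}
F_\pi(m) \;=\; \E\!\left[\frac{\Ind\{A=\pi(X)\}}{P(A|X)}\,\Ind\{Y\le m\}\right],
\end{equation*}
which follows from iterated expectation. Subtracting the analogous representation at $m$ from that at $m+\epsilon$, the same subtraction for $\pi_1$ and $\pi_2$ yields
\begin{equation*}
\bigl[F_{\pi_2}(m+\epsilon)-F_{\pi_2}(m)\bigr]-\bigl[F_{\pi_1}(m+\epsilon)-F_{\pi_1}(m)\bigr] \;=\; \E\!\left[\frac{\Ind\{A=\pi_2(X)\}-\Ind\{A=\pi_1(X)\}}{P(A|X)}\,\Ind\{m<Y\le m+\epsilon\}\right].
\end{equation*}

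Next I would bound the integrand pointwise. Since $\pi_i(X),A\in\{-1,1\}$, the numerator satisfies $|\Ind\{A=\pi_2(X)\}-\Ind\{A=\pi_1(X)\}|\le \tfrac12|\pi_1(X)-\pi_2(X)|$, and strong positivity gives $P(A|X)\ge\delta$ a.s. Taking absolute values, then conditioning on $X$ (so that the $Y$-indicator is replaced by $P(m<Y\le m+\epsilon\mid X)$) and invoking the hypothesis $P(m<Y\le m+\epsilon\mid X)\le L\epsilon$ almost surely, I obtain for all sufficiently small $\epsilon>0$
\begin{equation*}
\bigl|[F_{\pi_2}(m+\epsilon)-F_{\pi_2}(m)]-[F_{\pi_1}(m+\epsilon)-F_{\pi_1}(m)]\bigr| \;\le\; \frac{L\epsilon}{2\delta}\,\E\bigl[|\pi_1(X)-\pi_2(X)|\bigr].
\end{equation*}
Applying Cauchy--Schwarz (or Jensen) to the final expectation yields the upper bound $(L/(2\delta))\,\epsilon\,\norm{\pi_1-\pi_2}$.

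To conclude, I would divide by $\epsilon$ and let $\epsilon\downarrow 0$. Since, by \eqref{eq:derivbd}, both $\dot F_{\pi_1}(m)$ and $\dot F_{\pi_2}(m)$ exist (for $m$ in the relevant neighborhoods and, with the stated hypothesis, more generally by taking one-sided limits and noting the density bound forces absolute continuity of $F_\pi$), the difference of the one-sided difference quotients converges to $\dot F_{\pi_2}(m)-\dot F_{\pi_1}(m)$, and the inequality above passes to the limit to give $|\dot F_{\pi_2}(m)-\dot F_{\pi_1}(m)|\lesssim \norm{\pi_1-\pi_2}$. The only mild subtlety, and what I view as the one non-routine point, is the interchange of limit and bound: a two-sided derivative is not needed for the inequality if one bounds the numerator uniformly in small $\epsilon$ of either sign using the hypothesis, so the argument goes through whenever $\dot F_{\pi_i}(m)$ are well-defined. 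Everything else is a direct computation via the IPW identity, strong positivity, and the density-type bound, exactly in the spirit of \eqref{eq:first:ineq} in the proof of Lemma~\ref{lem:medianreguc}.
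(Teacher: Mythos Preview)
Your proposal is correct and follows essentially the same approach as the paper: both arguments write $\dot F_{\pi_2}(m)-\dot F_{\pi_1}(m)$ as a limit of difference quotients, bound the numerator via the IPW identity and the pointwise inequality $|\Ind\{A=\pi_2(X)\}-\Ind\{A=\pi_1(X)\}|\le\tfrac12|\pi_1(X)-\pi_2(X)|$, apply strong positivity, condition on $X$ to invoke the density-type bound, and finish with Cauchy--Schwarz. The paper is slightly more terse about the passage to the limit, but the substance is identical.
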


\begin{proof}[Proof of Lemma~\ref{lem:derivbd}]
  Set arbitrarily $\pi_{1},\pi_{2} \in\Pibar$ and $m \in \xR$.  In this proof,
  the   universal   positive   multiplicative  constants   attached   to   the
  $\lesssim$-inequalities  do not  depend  on $\pi_{1},  \pi_{2}, m$.   First,
  observe that
  \begin{equation*}
    \left|\dot{F}_{\pi_{2}}(m) - \dot{F}_{\pi_{1}}(m)\right|
    =                                               \lim_{\epsilon\downarrow
      0}\frac{1}{\epsilon}\left|F_{\pi_{2}}(m+\epsilon)  - F_{\pi_{2}}(m)  -
      F_{\pi_{1}}(m+\epsilon) + F_{\pi_{1}}(m)\right|. 
  \end{equation*}
  Second note that, for every $\epsilon>0$ small enough, 
  \begin{align*}
    \big|F_{\pi_{2}}(m+\epsilon)       -        F_{\pi_{2}}(m)       -
    &F_{\pi_{1}}(m+\epsilon) +    F_{\pi_{1}}(m)\big|\\ 
    &\le   \frac{1}{2}          \E\left[\frac{|\pi_{1}(X)-\pi_{2}(X)|}{P(A|X)}
      \Ind\{m<Y\le m+\epsilon\}\right] \\ 
    &\lesssim      \E\left[|\pi_{1}(X)-\pi_{2}(X)|     \Ind\{m<Y\le
      m+\epsilon\}\right] \\ 
    &=     \E\left[|\pi_{1}(X)      -     \pi_{2}(X)     |P(m<Y\le
      m+\epsilon|X)\right] \\ 
    &\le  L\epsilon\E\left[|\pi_{1}(X)-\pi_{2}(X)|\right]   \lesssim  \epsilon
      \norm{\pi_{1}-\pi_{2}}. 
  \end{align*}
  Returning to the first display completes the proof.
\end{proof}

\begin{corollary}\label{cor:derivcont}
  Under the  conditions of Lemmas~\ref{lem:medianreguc}~and~\ref{lem:derivbd},
  and    the    additional    assumption   (\ref{eq:contderiv}),    the    map
  $\pi\mapsto \dot{F}_\pi(\xV(\pi))$ is uniformly  continuous on $\Pibar$ with
  respect to $\norm{\cdot}$.
\end{corollary}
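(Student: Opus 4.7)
The plan is to control $|\dot{F}_{\pi_1}(\xV(\pi_1)) - \dot{F}_{\pi_2}(\xV(\pi_2))|$ by splitting it via the triangle inequality at an intermediate evaluation point, writing
\begin{equation*}
|\dot{F}_{\pi_1}(\xV(\pi_1)) - \dot{F}_{\pi_2}(\xV(\pi_2))| \le |\dot{F}_{\pi_1}(\xV(\pi_1)) - \dot{F}_{\pi_2}(\xV(\pi_1))| + |\dot{F}_{\pi_2}(\xV(\pi_1)) - \dot{F}_{\pi_2}(\xV(\pi_2))|.
\end{equation*}
The first piece changes the policy index while freezing the argument, and the second piece varies the argument while freezing the policy. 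This split is the natural one because we have an immediate bound for each piece coming from a different hypothesis.

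The first term I would handle directly with Lemma~\ref{lem:derivbd}: evaluating the lemma at $m = \xV(\pi_1)$ gives $|\dot{F}_{\pi_1}(\xV(\pi_1)) - \dot{F}_{\pi_2}(\xV(\pi_1))| \lesssim \norm{\pi_1 - \pi_2}$ with a constant uniform in $\pi_1, \pi_2 \in \Pibar$. For the second term I would apply assumption (\ref{eq:contderiv}) with $\pi = \pi_2$ and $k \equiv \xV(\pi_1) - \xV(\pi_2)$, which yields $|\dot{F}_{\pi_2}(\xV(\pi_2) + k) - \dot{F}_{\pi_2}(\xV(\pi_2))| \le \omega(|\xV(\pi_1) - \xV(\pi_2)|)$, provided $|k| \le c$. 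Lemma~\ref{lem:medianreguc} delivers the uniform continuity of $\xV$ on $\Pibar$, so restricting to $\norm{\pi_1 - \pi_2}$ small enough forces $|\xV(\pi_1) - \xV(\pi_2)| \le c$; since $\omega$ is continuous at $0$ with $\omega(0) = 0$, the same smallness of $\norm{\pi_1 - \pi_2}$ makes $\omega(|\xV(\pi_1) - \xV(\pi_2)|)$ arbitrarily small.

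To conclude, given $\epsilon > 0$ I would choose $\delta_1$ via uniform continuity of $\xV$ and continuity of $\omega$ at $0$ so that $\norm{\pi_1 - \pi_2} < \delta_1$ implies both $|\xV(\pi_1) - \xV(\pi_2)| \le c$ and $\omega(|\xV(\pi_1) - \xV(\pi_2)|) < \epsilon/2$; then choose $\delta_2$ so that the bound from Lemma~\ref{lem:derivbd} makes the first term smaller than $\epsilon/2$. Taking $\delta = \min(\delta_1, \delta_2)$ delivers the desired uniform continuity, with both quantifiers uniform in $\pi_1, \pi_2 \in \Pibar$.

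I expect the only subtle point to be verifying the range-of-validity condition $|k| \le c$ required for (\ref{eq:contderiv}), together with the implicit requirement that $\xV(\pi_1)$ lies in the neighborhood $[\xV(\pi_2) \pm c]$ on which $\dot{F}_{\pi_2}$ is well-defined by (\ref{eq:derivbd}). This is not really an obstacle, however, since Lemma~\ref{lem:medianreguc} provides exactly the uniform continuity of $\xV$ needed to place $\xV(\pi_1)$ inside that window as soon as $\norm{\pi_1 - \pi_2}$ is small enough, so the global uniform continuity on $\Pibar$ follows with essentially no additional work.
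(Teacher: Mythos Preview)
Your proposal is correct and essentially identical to the paper's own proof: the same triangle-inequality split into a ``change-of-policy'' piece handled by Lemma~\ref{lem:derivbd} and a ``change-of-argument'' piece handled by (\ref{eq:contderiv}), with Lemma~\ref{lem:medianreguc} invoked to guarantee the range condition $|k|\le c$. The only cosmetic difference is that the paper inserts the intermediate point $\xV(\pi_2)$ rather than $\xV(\pi_1)$, which is immaterial by symmetry.
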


\begin{proof}[Proof                                                         of
  Corollary~\ref{cor:derivcont}]\hypertarget{proof:lemmedianreguc}
  Set  $\pi_{1},\pi_{2}\in\Pibar$.   By  Lemma~\ref{lem:medianreguc},  we  can
  choose  $\pi_{2}$  sufficiently  close   to  $\pi_{1}$  in  $L^2(P)$  (where
  ``sufficiently close'' does  not depend on the choice of  $\pi_{1}$) so that
  $|\xV(\pi_{1})-\xV(\pi_{2})|\le  c$,   where  $c$   is  the   constant  from
  (\ref{eq:derivbd}). For all such choices of $\pi_{1},\pi_{2}$,
  \begin{eqnarray*}
    \left|\dot{F}_{\pi_{2}}(\xV(\pi_{2}))-\dot{F}_{\pi_{1}}(\xV(\pi_{1}))\right|
    &\le&            \left|\dot{F}_{\pi_{2}}(\xV(\pi_{2}))           -
          \dot{F}_{\pi_{1}}(\xV(\pi_{2}))\right|                               +
          \left|\dot{F}_{\pi_{1}}(\xV(\pi_{2}))                              -
          \dot{F}_{\pi_{1}}(\xV(\pi_{1}))\right|     \\ 
    &\lesssim& \norm{\pi_{1}-\pi_{2}} + \omega(\norm{\pi_{1}-\pi_{2}}),
  \end{eqnarray*}
  where the  constant on the right  does not depend on  $\pi_{1},\pi_{2}$. The
  final  bound  used  Lemma~\ref{lem:derivbd}  and  (\ref{eq:contderiv}).   As
  $\omega(0)=0$  and   $\omega$  is  continuous   at  zero,  one   can  choose
  $\norm{\pi_{1}-\pi_{2}}$ sufficiently  small so that the  right-hand side is
  less than any $\epsilon>0$. As ``sufficiently small'' does not depend on the
  choice  of   $\pi_{1}$,  $\pi\mapsto  \dot{F}_\pi(\xV(\pi))$   is  uniformly
  continuous on $\Pibar$ with respect to $\norm{\cdot}$.
\end{proof}

{\singlespacing 

  \bibliographystyle{unsrtnat} \bibliography{fasterRatesForPolicyLearning}}

\end{document}